\documentclass[11pt]{article}

\usepackage[letterpaper,margin=1.1in]{geometry}
\usepackage[T1]{fontenc}
\usepackage{amsmath,amssymb,amsthm,mathtools,bm,graphicx,url,microtype,setspace,hyperref,float,caption}
\graphicspath{{figures/}}
\hypersetup{
    colorlinks=true,
    citecolor=blue,
    linkcolor=blue,
    urlcolor=blue,
}

\newtheorem{theorem}{Theorem}
\newtheorem{lemma}{Lemma}

\newcommand{\R}{\mathbb{R}}
\newcommand{\E}{\mathbb{E}}
\newcommand{\tr}{\operatorname{tr}}

\newcommand{\cW}{\mathcal{W}}

\newif\ifarxivpreprint
\arxivpreprinttrue
\newcommand{\preprintnotice}{%
\ifarxivpreprint
\begin{center}
\small
\textbf{Preprint version.} A version of this work has been submitted to the IEEE for possible publication. Copyright may be transferred without notice, after which this version may no longer be accessible.
\end{center}
\fi
}

\title{\textbf{Distributionally Robust Regret Optimal LQR with Common Stage-Law Ambiguity}}

\author{
Lukas-Benedikt Fiechtner\\
\normalsize Institute for Computational and Mathematical Engineering, Stanford University
\and
Jose Blanchet\\
\normalsize Department of Management Science and Engineering\\
\normalsize Institute for Computational and Mathematical Engineering, Stanford University
}
\date{April 2026}

\begin{document}

\maketitle
\preprintnotice

\begin{abstract}
We study, to our knowledge, the first tractable multistage ex-ante distributionally robust regret optimization (DRRO) formulation for stochastic control. We consider finite-horizon LQR under common stage-law ambiguity: disturbances are independent across time but share an unknown stage law whose mean and covariance lie in a Gelbrich ball around nominal parameters. Unlike the single-stage quadratic case, the nominal certainty-equivalent (CE) controller is generally not regret-optimal, because reuse of the stage law makes past disturbances informative for future decisions. Despite the general NP-hardness of DRRO, we show that over linear disturbance-feedback policies the resulting multistage DRRO-LQR problem admits an exact semidefinite programming reformulation. The optimal controller is the nominal certainty-equivalent LQR law plus a strictly causal empirical-mean correction. We also characterize worst-case distributions and show that those for the DRRO-optimal policy are nonunique. Numerical results show that, relative to the corresponding DRO controller under the same ambiguity set, DRRO is often substantially less conservative while preserving the intended regret guarantee, and that its correction coefficients empirically approach the certainty-equivalent feedforward coefficient.

\end{abstract}

\section{Introduction}\label{sec:introduction}

Modern control is often designed from estimated disturbance models rather than a known ground-truth law. In finite-horizon stochastic control, the nominal prescription is to optimize expected cost under a reference model, which in LQR yields the certainty-equivalent controller. This benchmark can be fragile under distributional misspecification. A standard remedy is Wasserstein distributionally robust optimization (DRO), which minimizes worst-case expected cost over an ambiguity set around the reference law \cite{mohajerin_esfahani_data-driven_2018}. While robust and often tractable, DRO can be conservative because it protects against the worst distribution in the ambiguity set without accounting for proximity to the model-wise optimum.

Distributionally robust regret optimization (DRRO) replaces worst-case cost by
worst-case regret. We study the \emph{ex-ante} notion, where a policy is
evaluated against the best \emph{causal} controller for each candidate law and
then under worst-case ambiguity. This differs from the \emph{ex-post} regret
criteria common in control, which use a clairvoyant \emph{noncausal} benchmark
with disturbance preview. Ex-ante DRRO has mostly been studied in static
settings \cite{perakis_regret_2008,chen_regret_2021,agarwal_minimax_2022,cho_wasserstein_2024}.

This distinction matters because the existing Wasserstein regret-robust control
literature is largely ex-post. Full-information finite-horizon, partially
observed, infinite-horizon, and output-feedback variants appear in
\cite{taha_distributionally_2023,hajar_wasserstein_2023,kargin_wasserstein_2023,yan_scherer_2025}. They differ from ours in two ways: regret is measured
against a clairvoyant noncausal controller with disturbance preview, and the
ambiguity is typically placed on the joint law of the uncertainty process
rather than on a single stage law reused across time. The finite-horizon
models are tractable and admit SDP reformulations.

To place our comparison baseline in context, we also recall the related DRO control literature. On the DRO side, tractable Wasserstein LQ/LQG models usually rely on stagewise or rectangular ambiguity. In \cite{NEURIPS2023_3b7a66b2} the authors study partially observed distributionally robust LQG with separate ambiguity sets for the initial condition, process noise, and measurement noise. There, the adversary may vary distributions across time, but does not adapt to the current state. A second line, including \cite{xu_mannor_2012,yang_convex_2017,yang_wasserstein_2021}, uses rectangular or statewise ambiguity to recover Bellman equations. In particular, \cite{kim_yang_2023} derive a Riccati recursion for a minimax LQ problem with a soft Wasserstein penalty, but their adversary is substantially stronger than ours because it selects a worst-case distribution as a function of the current state at each stage.

Our setting differs from both strands above. We study \emph{common stage-law ambiguity}: the adversary selects a single disturbance law and reuses it across the horizon. This shared-law ambiguity is non-rectangular, so standard dynamic programming is unavailable. But that coupling is exactly what makes the problem interesting: because the same law governs all disturbances, past observations are informative about future ones. This repeated-use structure is reminiscent of control with a fixed unknown parameter throughout time \cite{pmlr-v168-gurevich22a,Carruth2023OptimalAC}, although here the uncertainty is distributional rather than parametric.

Building on the static ex-ante Wasserstein DRRO analysis of \cite{fiechtner2025wasserstein}, we study what is, to our knowledge, the first tractable multistage \emph{ex-ante} DRRO formulation for stochastic control, and the first exact tractable dynamic quadratic DRRO model in this sense. The static theory shows two seemingly conflicting facts: general DRRO is NP-hard, yet in the static quadratic case the nominal certainty-equivalent decision is DRRO-optimal for every ambiguity radius. This leaves a natural question: does finite-horizon LQR remain equally benign, or does the multistage structure create a genuinely new synthesis problem?

The answer is that it does. Under common stage-law ambiguity, the nominal certainty-equivalent controller is generally no longer regret-optimal once there is more than one stage. The reason is dynamic and simple: reusing the same unknown stage law creates an intertemporal learning effect. Realized disturbances reveal information about the shared mean, and future controls can exploit that information.

Our main results are as follows. First, despite the general hardness of DRRO, the resulting linear-policy DRRO-LQR problem admits an exact semidefinite reformulation. Second, the optimal linear controller has a transparent structure: it is the nominal certainty-equivalent LQR policy plus a strictly causal correction driven by the empirical mean of past disturbances, so the history enters only through a learned summary statistic. Third, we derive the corresponding DRO problem under the same shared-law Gelbrich ambiguity set, giving a clean comparison because both models use the same non-rectangular uncertainty structure but optimize different objectives. Fourth, we characterize worst-case distributions, including the nonuniqueness of the worst-case law for the DRRO-optimal policy. Finally, our experiments show that DRRO is often substantially less conservative than DRO while preserving the intended regret guarantee, and provide empirical evidence for a built-in learning effect: over time, the DRRO controller moves toward the certainty-equivalent optimal policy for the true stage law, whereas the DRO controller does not.

\section{Problem Formulation}\label{sec:model_formulation}

\subsection{System Model}

We consider the finite-horizon stochastic LQR system
\begin{equation}
x_{t+1}=Ax_t+Bu_t+\Xi w_t,\qquad t=0,\dots,T-1,
\end{equation}
where $x_t\in\R^n$ is the \emph{system state}, $u_t\in\R^m$ is the \emph{control input}, and $w_t\in\R^d$ is the
\emph{disturbance noise}. The system matrices have dimensions $A\in\R^{n\times n}$, $B\in\R^{n\times m}$,
and $\Xi\in\R^{n\times d}$.
For a realized disturbance trajectory, the quadratic cost is
\begin{equation}
L(u_{0:T-1};w_{0:T-1})
:=
\sum_{t=0}^{T-1}\bigl(x_t^\top Q_t x_t+u_t^\top R_tu_t\bigr)+x_T^\top Q_Tx_T.
\label{eq:cdc-realized-cost}
\end{equation}
Throughout, we assume $Q_t,Q_T\succeq 0$ and $R_t\succ 0$; also $x_0$ is deterministic and known.

\subsection{Ambiguity Set}

Under a fixed stage law, we assume that the disturbances
$w_0,\dots,w_{T-1}$ are i.i.d.\ with common law $P$ on $\R^d$. We write
\[
\mu:=\E_P[w],\qquad \Sigma:=\operatorname{Cov}_P(w)\succeq 0
\]
for the mean and covariance of the one-stage law.

We are given nominal moments $(\hat\mu,\hat\Sigma)$ with $\hat\Sigma\succeq 0$,
and model moment uncertainty through the Gelbrich moment ambiguity set
\begin{equation}
\cW:=
\left\{
(\mu,\Sigma):
\Sigma\succeq 0,\ 
\|\mu-\hat\mu\|_2^2+
\mathsf{B}^2(\Sigma,\hat\Sigma)
\le \delta^2
\right\}.
\end{equation}
Here $\mathsf{B}(\Sigma,\hat\Sigma)$ denotes the Bures distance between
covariance matrices,
\begin{equation}
\mathsf{B}^2(\Sigma,\hat\Sigma)
:=
\tr\!\bigl(
\Sigma+\hat\Sigma
-2(\hat\Sigma^{1/2}\Sigma\hat\Sigma^{1/2})^{1/2}
\bigr).
\end{equation}
Thus $\|\mu-\hat\mu\|_2^2+\mathsf{B}^2(\Sigma,\hat\Sigma)$ is the squared Gelbrich distance between the moment pairs
$(\mu,\Sigma)$ and $(\hat\mu,\hat\Sigma)$. It is always a lower bound on the
squared $2$-Wasserstein distance between distributions with these first two
moments, and the two coincide for elliptical distributions, in particular for
Gaussian laws. The moment set $\cW$ induces the corresponding Gelbrich ambiguity set of admissible
i.i.d.\ stage laws
\[
\mathcal P_{\cW}
:=
\left\{
P \text{ probability law} :
(\E_P[w],\operatorname{Cov}_P(w))\in\cW
\right\}.
\]

\subsection{Linear Disturbance-Feedback Policies}

Let $\Pi_{\mathrm{c}}$ denote the set of all \emph{causal} (i.e.\ adapted) policies,
\[
\Pi_{\mathrm c}
=
\left\{
\pi=(\phi_t)_{t=0}^{T-1}:
u_t=\phi_t(x_t,w_0,\dots,w_{t-1})
\right\}.
\]
In this paper we will work with the subclass of \emph{linear disturbance-feedback} policies
\[
\Pi_{\mathrm{lin}}^w
:=
\left\{
\pi:
u_t=g_t+\sum_{s=0}^{t-1}F_{ts}w_s,\quad t=0,\dots,T-1.
\right\}
\]
\subsection{Fixed-Law Optimal Control}

For a fixed stage law $P$ with mean $\mu$ and covariance $\Sigma$, the expected cost of a policy
$\pi\in\Pi_{\mathrm{c}}$ is
\begin{equation}
J(\pi;P):=\E^\pi\!\left[L(u_{0:T-1};w_{0:T-1})\right]
\end{equation}
where the superscript in the expectation means that $u_t$ are generated using $\pi$ under the disturbance noise $w$. 
The corresponding optimal value is
\begin{equation}
J^\star(P):=\inf_{\pi\in\Pi_{\mathrm{c}}}J(\pi;P).
\end{equation}

The fixed-law value function has the form
\begin{equation}
V_t(x;\mu,\Sigma)
=
x^\top S_tx+2x^\top P_t\mu+\mu^\top N_t\mu+\tr(\Gamma_t\Sigma),
\end{equation}
where the coefficients satisfy the backward recursions, for $t=T-1,\dots,0$,
\begin{align}
M_t&:=R_t+B^\top S_{t+1}B,\\
K_t&:=-M_t^{-1}B^\top S_{t+1}A,\label{eq:def_K}\\
\bar H_t&:=-M_t^{-1}B^\top(S_{t+1}\Xi+P_{t+1}),\label{eq:def_Hbar}\\
S_t&=Q_t+A^\top S_{t+1}A-A^\top S_{t+1}BM_t^{-1}B^\top S_{t+1}A,\\
P_t&=(A+BK_t)^\top(S_{t+1}\Xi+P_{t+1}),\label{eq:def_P}\\
N_t&=N_{t+1}+\Xi^\top S_{t+1}\Xi+P_{t+1}^\top\Xi+\Xi^\top P_{t+1}\nonumber\\
&\hspace{2em}-\bar H_t^\top M_t\bar H_t,\label{eq:def_N}\\
\Gamma_t&=\Gamma_{t+1}+\Xi^\top S_{t+1}\Xi
\end{align}
with terminal conditions
\begin{equation}
S_T=Q_T,\qquad P_T=0,\qquad N_T=0,\qquad \Gamma_T=0.
\end{equation}
Therefore
\begin{equation}
J^\star(P)
=
x_0^\top S_0x_0+2x_0^\top P_0\mu+\mu^\top N_0\mu+\tr(\Gamma_0\Sigma)
\end{equation}
so the optimal value depends on the stage law only through its first two
moments. Accordingly, we also write
\begin{equation}
J^\star(\mu,\Sigma):=J^\star(P).
\end{equation}
The certainty-equivalent optimal controller is
\begin{equation}
u_t^{\mathrm{ce}}(x;\mu)=K_tx+\bar H_t\mu.
\end{equation}
Under the reference law $(\hat\mu,\hat\Sigma)$ this becomes
\begin{equation}
u_t^{\mathrm{ref}}(x)=K_tx+\bar H_t\hat\mu.
\end{equation}

\subsection{DRRO}

For a stage law $P$, define the \emph{regret}
\begin{equation}\label{eq:regret_definition}
\mathcal{R}(\pi;P)=J(\pi;P)-J^\star(P).
\end{equation}
It measures the cost incurred by pre-committing to a policy $\pi$ before
knowing the true stage law. The distributionally regret-robust problem is
\begin{equation}
\inf_{\pi\in\Pi_{\mathrm{c}}}\ \sup_{P\in\mathcal P_{\cW}}
\mathcal{R}(\pi;P).
\end{equation}
To obtain a tractable formulation, we now restrict the outer minimization to the
linear class $\Pi_{\mathrm{lin}}^w$. Because for every $\pi\in\Pi_{\mathrm{lin}}^w$ both $J(\pi;P)$ 
and $J^\star(P)$ depend on the stage law $P$ only through its first two moments, 
we may identify all laws with the same moment pair $(\mu,\Sigma)$; in particular, 
optimizing over $P\in\mathcal P_{\cW}$ is equivalent to optimizing over $(\mu,\Sigma)\in\cW$.
Accordingly, we study
\begin{equation}
\inf_{\pi\in\Pi_{\mathrm{lin}}^w}\ \sup_{(\mu,\Sigma)\in\cW}
\mathcal{R}(\pi; \mu, \Sigma).
\label{eq:cdc-drro-linear}
\end{equation}
In the sequel, we identify a stage law with its
moment pair and use the two descriptions interchangeably.

\subsection{DRO}

As a comparison model, we also consider the distributionally robust cost
problem with linear policies
\begin{equation}
\inf_{\pi\in\Pi_{\mathrm{lin}}^w}\ \sup_{(\mu,\Sigma)\in\cW}J(\pi;\mu,\Sigma).
\end{equation}
This is a less conservative adversarial model than the fully adaptive DRO formulations common in the literature \cite{NEURIPS2023_3b7a66b2, kim_yang_2023}
where the adversary may choose stage laws that vary with time, the current
state, or both.

\section{DRRO SDP Reformulation}

\subsection{Advantage Representation of Worst-Case Regret}

For a fixed stage law $P$ with mean $\mu$ and covariance $\Sigma$, the certainty-equivalent controller from
the previous section is optimal. Therefore completing squares stagewise yields
the advantage identity
\begin{equation}
J(\pi;P)-J^\star(P)
=
\E^\pi\!\left[
\sum_{t=0}^{T-1}
\eta_t^\top M_t\eta_t
\right],
\label{eq:cdc-advantage}
\end{equation}
where $\eta_t:=u_t-K_tx_t-\bar H_t\mu$.
Hence regret is the expected quadratic mismatch to the
certainty-equivalent comparator.

\subsection{Disturbance-Feedback Parametrization}

Motivated by the advantage representation we parametrize the linear disturbance-feedback
policies as 
\begin{equation}\label{eq:policy_parametrization_new}
\begin{aligned}
  u_0 &= K_0 x_0 + \bar H_0 \hat \mu + g_0\\
  u_t &= K_t x_t + \bar H_t \hat \mu + \sum_{s=0}^{t-1}F_{ts}(w_s - \hat\mu) + g_t, \quad t\geq 1
\end{aligned}
\end{equation}
where only $F_{ts}$ and $g_t$ are free and $K_t$ and $\bar H_t$ are as in \eqref{eq:def_K} and \eqref{eq:def_Hbar}.
Since $x_t$ is an affine function of $w_0,\ldots,w_{t-1}$, this parametrization is equivalent to the class $\Pi_{\mathrm{lin}}^w$, but it is more convenient from an optimization point of view.
We obtain $\eta_t = \bar H_t(\hat\mu - \mu) + \sum_{s=0}^{t-1}F_{ts}(w_s - \hat\mu) + g_t$. 
Plugging this back into \eqref{eq:cdc-advantage} and writing $\Lambda_t = \sum_{s=0}^{t-1}F_{ts}$ for $t\geq 1$, with the convention $\Lambda_0:=0$, and $z=\mu - \hat\mu$, one can explicitly write
the regret as a function of $F$ and $g$ as follows:
\begin{equation}\label{eq:controller_objective_in_Fg}
  \begin{aligned}
\mathcal{R}(F, g) = \max_{z, \Sigma}\,  &a(F, g)
 +z^\top \mathcal B(F)z
 +2z^\top c(F, g) +\tr\bigl(\mathcal{A}(F)\Sigma\bigr)\\
\text{s.t.} \quad &\Sigma\succeq 0,\ \|z\|_2^2 + \mathsf{B}^2(\Sigma, \hat\Sigma)\leq\delta^2
\end{aligned}
\end{equation}

where
\begin{align}
  a(F, g) &= \sum_{t=0}^{T-1}g_t^\top M_t g_t\\
  c(F, g) &= \sum_{t=0}^{T-1}(\Lambda_t - \bar H_t)^\top M_t g_t\\
  \mathcal A(F) &= \sum_{t=1}^{T-1}\sum_{s=0}^{t-1} F_{ts}^\top M_tF_{ts}\\
  \mathcal B(F) &= \sum_{t=0}^{T-1}(\Lambda_t - \bar H_t)^\top M_t(\Lambda_t - \bar H_t)
\end{align}

\subsection{Dualization of Worst-Case Regret}
Note that the inner problem in \eqref{eq:controller_objective_in_Fg} is the
maximization of a convex function. Still, it admits a convenient dual
representation, which turns the inner maximization into a minimization problem.
This dual form can then be combined with the outer minimization over $F$ and
$g$.

\begin{theorem}\label{thm:dual_Fg}
The maximization problem \eqref{eq:controller_objective_in_Fg} admits the
dual representation
\begin{equation}\label{eq:dual_Fg}
\begin{aligned}
\mathcal R(F,g)=\min_{\gamma,\tau,U}\quad
&a(F,g)+\gamma\bigl(\delta^2-\tr(\hat\Sigma)\bigr)+\tau+\tr(U\hat\Sigma)\\
\text{s.t.}\quad
& \gamma\geq 0, \quad U\in\mathbb S^d,\\
&\begin{pmatrix}
\gamma I_d-\mathcal B(F) & c(F,g)\\
c(F,g)^\top & \tau
\end{pmatrix}\succeq 0, \quad \begin{pmatrix}
\gamma I_d-\mathcal A(F) & \gamma I_d\\
\gamma I_d & U
\end{pmatrix}\succeq 0.
\end{aligned}
\end{equation}
\end{theorem}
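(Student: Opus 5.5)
The plan is to prove the identity by a budget-splitting argument. The single Gelbrich constraint couples a nonconvex quadratic maximization in $z$ with a convex maximization in $\Sigma$. I will dualize each piece separately, show that each resulting value function is concave in its share of the budget, and then recombine the two pieces by a minimax argument.

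\emph{Step 1 (splitting).} Write $\rho=\|z\|_2^2\in[0,\delta^2]$. Problem \eqref{eq:controller_objective_in_Fg} equals $a(F,g)+\max_{\rho\in[0,\delta^2]}\bigl(f(\rho)+h(\delta^2-\rho)\bigr)$, where
\[
f(\rho)=\max_{\|z\|_2^2\le\rho}\ z^\top\mathcal B(F)z+2z^\top c(F,g),
\]
and $h(r)=\max\{\tr(\mathcal A(F)\Sigma):\Sigma\succeq 0,\ \mathsf B^2(\Sigma,\hat\Sigma)\le r\}$. Since $f$ and $h$ are nondecreasing, we may take the constraints as inequalities.

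\emph{Step 2 (the $z$-piece).} The function $f$ is a trust-region subproblem, so the S-lemma gives strong duality:
\[
f(\rho)=\min_{\gamma\ge 0,\ \gamma I\succeq\mathcal B}\Bigl(\gamma\rho+c^\top(\gamma I-\mathcal B)^{\dagger}c\Bigr),
\]
with the range condition $c\in\operatorname{range}(\gamma I-\mathcal B)$. By a Schur complement, the condition $c^\top(\gamma I-\mathcal B)^\dagger c\le\tau$ together with this range condition is exactly the first LMI in \eqref{eq:dual_Fg}. Hence $f(\rho)=\min_{\gamma}\bigl(\gamma\rho+\varphi_1(\gamma)\bigr)$ with $\varphi_1(\gamma)=\min\{\tau:\text{first LMI}\}$.

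\emph{Step 3 (the $\Sigma$-piece).} Use the variational form $\mathsf B^2(\Sigma,\hat\Sigma)=\tr(\Sigma+\hat\Sigma)-2\max_C\tr(C)$, where the maximum runs over $C$ with $\begin{psmallmatrix}\Sigma & C\\ C^\top&\hat\Sigma\end{psmallmatrix}\succeq 0$. This shows that the feasible set of $h$ is convex and that $h(r)$ is the value of a convex SDP, which is Slater-feasible when $r>0$. Its Lagrangian dual with multiplier $\gamma$ is
\[
h(r)=\min_\gamma\ \gamma\bigl(r-\tr\hat\Sigma\bigr)+\sup_{\Sigma\succeq0}\Bigl[\tr\bigl((\mathcal A-\gamma I)\Sigma\bigr)+2\gamma\tr\bigl((\hat\Sigma^{1/2}\Sigma\hat\Sigma^{1/2})^{1/2}\bigr)\Bigr].
\]
For $\gamma I\succ\mathcal A$ the inner supremum evaluates to $\gamma^2\tr\bigl(\hat\Sigma(\gamma I-\mathcal A)^{-1}\bigr)$; this is the standard Gelbrich computation, done via the $C$-lift and a Schur complement. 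That quantity equals $\min\{\tr(U\hat\Sigma):U\succeq\gamma^2(\gamma I-\mathcal A)^{-1}\}$, and by a Schur complement the constraint on $U$ is the second LMI. The boundary case $\gamma I\succeq\mathcal A$ with $\gamma I-\mathcal A$ singular is handled by closure: the LMI form automatically encodes the correct extended value.

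\emph{Step 4 (recombination).} Both $f$ and $h$ are infima of affine functions of their argument, so they are concave. The recombined value is
\[
\sup_{\rho\in[0,\delta^2]}\ \min_{\gamma_1,\gamma_2}\Bigl[\gamma_1\rho+\gamma_2(\delta^2-\rho)+\varphi_1(\gamma_1)+\varphi_2(\gamma_2)\Bigr],
\]
where $\varphi_2(\gamma)=-\gamma\tr\hat\Sigma+\min\{\tr(U\hat\Sigma):\text{second LMI}\}$. The bracket is linear in $\rho$ over a compact interval and convex in $(\gamma_1,\gamma_2)$, so Sion's theorem lets us swap $\sup$ and $\min$. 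Maximizing over $\rho$ then gives $\max(\gamma_1,\gamma_2)\,\delta^2$. Both $\varphi_1$ and $\varphi_2$ are nonincreasing in $\gamma$, because enlarging $\gamma$ only relaxes each LMI; for $\varphi_2$ this uses that $\gamma^2(\gamma I-\mathcal A)^{-1}-\gamma I=\mathcal A(\gamma I-\mathcal A)^{-1}\gamma$ is decreasing in $\gamma$. We may therefore replace both multipliers by $\gamma=\max(\gamma_1,\gamma_2)$, which yields exactly \eqref{eq:dual_Fg}. Finally, the minimum is attained because the dual objective is coercive in $(\tau,U)$ given $\hat\Sigma$, and in $\gamma$ through the $\gamma\delta^2$ term.

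\textbf{Main obstacle.} I expect the main difficulty to be Step 3: the closed-form evaluation of the Bures-penalized supremum, and justifying strong duality at the edges, namely $r=0$ (no Slater point), singular $\hat\Sigma$, and $\gamma$ with $\gamma I-\mathcal A$ singular. I would first prove everything for $\delta>0$ and $\hat\Sigma\succ0$, and then extend to the degenerate cases by continuity of both sides in $(\hat\Sigma,\delta)$.
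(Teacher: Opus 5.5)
Your proof is correct, but it takes a different route from the paper's.

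\textbf{How the paper argues.} The paper never splits the budget. It applies the S-lemma to the $z$-problem for each fixed $\Sigma$, with the $\Sigma$-dependent radius $\delta^2-\mathsf B^2(\Sigma,\hat\Sigma)$. It then invokes Sion's theorem over the compact Bures ball $\{\Sigma\succeq0:\mathsf B^2(\Sigma,\hat\Sigma)\le\delta^2\}$ against the multipliers $(\lambda,\tau)$. Finally it dualizes the remaining $\Sigma$-problem through the lifted Bures SDP. The ball constraint has to be kept for compactness, so this last step produces a second multiplier $\nu\ge0$. The two multipliers are merged by setting $\gamma=\lambda+\nu$ and eliminating $\lambda\in[0,\gamma]$, which needs only monotonicity of the first LMI.

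\textbf{How you argue.} You allocate the budget explicitly and dualize each piece with its own standard tool: the trust-region S-lemma for $z$, and scalar Lagrangian duality plus the closed-form Gelbrich supremum $\gamma^2\tr(\hat\Sigma(\gamma I_d-\mathcal A)^{-1})$ for $\Sigma$. Your minimax swap is over the interval $[0,\delta^2]$, where the objective is linear in $\rho$ and the supremum is simply $\delta^2\max(\gamma_1,\gamma_2)$. The merge then needs monotonicity of both $\varphi_1$ and $\varphi_2$. That is why you need the extra fact that $\gamma\mathcal A(\gamma I_d-\mathcal A)^{-1}$ is nonincreasing.

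\textbf{What each buys.} The paper's route stays entirely within conic duality and never needs the Gelbrich closed form. Yours makes $\gamma$ visibly the common shadow price of the shared radius. It also directly exhibits the maximizers $(\gamma I_d-\mathcal B)^{-1}c$ and $\gamma^2(\gamma I_d-\mathcal A)^{-1}\hat\Sigma(\gamma I_d-\mathcal A)^{-1}$, which the paper only derives later in its worst-case characterization.

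\textbf{Two remarks on the edge cases you flagged.}

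First, the continuity extension in $\hat\Sigma$ is unnecessary. You dualize only the scalar constraint over $\Sigma\succeq0$, so the point $\hat\Sigma+\varepsilon I_d$ (small $\varepsilon>0$) is a Slater point for every $r>0$, whether or not $\hat\Sigma$ is singular. The Gelbrich formula holds for all $\hat\Sigma\succeq0$ once $\gamma I_d\succ\mathcal A$. At $\rho\in\{0,\delta^2\}$ your dual expressions are still correct as infima, which is all Sion's theorem needs.

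Second, your closing attainment argument is too quick. When $\mathcal A\neq0$, the feasible set of $\gamma$ is open at $\lambda_{\max}(\mathcal A)$. For example, take $\hat\Sigma=0$, $c=0$ and $\lambda_{\max}(\mathcal A)\ge\lambda_{\max}(\mathcal B)>0$. Then the value $a+\delta^2\lambda_{\max}(\mathcal A)$ is only approached as $\gamma\downarrow\lambda_{\max}(\mathcal A)$ and is never attained. Attainment is guaranteed under $\hat\Sigma\succ0$ and $\delta>0$, as in the paper's later scalarization lemma. Otherwise the min should be read as an inf, a point the paper's own proof also glosses over.
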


\begin{proof}
For fixed $\Sigma$, let $r(\Sigma):=\delta^2-\mathsf B^2(\Sigma,\hat\Sigma)$.
Then the inner maximization over $z$ in
\eqref{eq:controller_objective_in_Fg} is
\[
\max_{\|z\|_2^2\le r(\Sigma)}
\{z^\top\mathcal B(F)z+2z^\top c(F,g)\}.
\]
By the S-Lemma,
\[
\max_{\|z\|_2^2\le r(\Sigma)}
\{z^\top\mathcal B(F)z+2z^\top c(F,g)\}
=
\min_{(\lambda,\tau)\in\mathcal D_{F,g}}
\{\lambda r(\Sigma)+\tau\},
\]
where
\[
\mathcal D_{F,g}:=
\left\{
(\lambda,\tau):
\begin{pmatrix}
\lambda I_d-\mathcal B(F) & c(F,g)\\
c(F,g)^\top & \tau
\end{pmatrix}\succeq 0, \ \lambda\geq 0
\right\}.
\]
Hence, with
$\mathcal S_\delta:=\{\Sigma\succeq0:\mathsf B^2(\Sigma,\hat\Sigma)\le\delta^2\}$,
\begin{align*}
\mathcal R(F,g)
=
\max_{\Sigma\in\mathcal S_\delta}
\min_{(\lambda,\tau)\in\mathcal D_{F,g}}
\Bigl\{
a(F,g)+\tr(\mathcal A(F)\Sigma)+\lambda\bigl(\delta^2-\mathsf B^2(\Sigma,\hat\Sigma)\bigr)+\tau
\Bigr\}.
\end{align*}
The set $\mathcal S_\delta$ is compact and convex, and the objective is
concave in $\Sigma$ and affine in $(\lambda,\tau)$. Thus Sion's theorem yields
\begin{align}\label{eq:regret_halfway}
\mathcal R(F,g)
=
\min_{(\lambda,\tau)\in\mathcal D_{F,g}}
\{a(F,g)+\lambda\delta^2+\tau+h_\lambda(F)\},
\end{align}
where
\[
h_\lambda(F):=
\max_{\Sigma\in\mathcal S_\delta}
\{\tr(\mathcal A(F)\Sigma)-\lambda\mathsf B^2(\Sigma,\hat\Sigma)\}.
\]

Using the lifted representation of the Bures distance,
\begin{align*}
h_\lambda(F)
=
\max_{\Sigma,Y}
\bigl\{
&\tr\bigl((\mathcal A(F)-\lambda I_d)\Sigma\bigr)
+2\lambda\tr(Y)-\lambda\tr(\hat\Sigma)
\bigr\}\\
\text{s.t.}\quad &
\tr(\Sigma+\hat\Sigma-2Y)\le\delta^2,\
\begin{pmatrix}
\Sigma & Y\\
Y^\top & \hat\Sigma
\end{pmatrix}\succeq 0.
\end{align*}
Its dual is
\begin{align*}
\min_{\nu\ge0,\ U\in\mathbb S^d}
&\{\nu\delta^2-(\lambda+\nu)\tr(\hat\Sigma)+\tr(U\hat\Sigma)\}
\\
\text{s.t.}\quad &
\begin{pmatrix}
(\lambda+\nu)I_d-\mathcal A(F) & (\lambda+\nu)I_d\\
(\lambda+\nu)I_d & U
\end{pmatrix}\succeq 0.
\end{align*}
Strong duality holds by Slater's condition on the dual side. Substituting this
into \eqref{eq:regret_halfway} and setting $\gamma:=\lambda+\nu$ gives
\begin{align*}
\mathcal R(F,g)
=
\min_{\gamma,\lambda,\tau,U}\quad
&a(F,g)+\gamma\bigl(\delta^2-\tr(\hat\Sigma)\bigr)+\tau+\tr(U\hat\Sigma)\\
\text{s.t.}\quad
&0\le\lambda\le\gamma,\quad U\in\mathbb S^d\\
&
\begin{pmatrix}
\lambda I_d-\mathcal B(F) & c(F,g)\\
c(F,g)^\top & \tau
\end{pmatrix}\succeq 0, \quad \begin{pmatrix}
\gamma I_d-\mathcal A(F) & \gamma I_d\\
\gamma I_d & U
\end{pmatrix}\succeq 0.
\end{align*}
Since the objective does not depend on $\lambda$ and the first LMI
is monotone in $\lambda$, feasibility for some $\lambda\in[0,\gamma]$ is
equivalent to feasibility at $\lambda=\gamma$. Eliminating $\lambda$ therefore
yields exactly the SDP \eqref{eq:dual_Fg}.
\end{proof}

\subsection{\texorpdfstring{Explicit Minimization over $g_t$}{Explicit Minimization over g\_t}}

The controller aims to minimize \eqref{eq:controller_objective_in_Fg}. The following lemma shows this minimization can be performed explicitly.

\begin{lemma}\label{lemma:g_reduction}
For every fixed $F$, the dual problem in Theorem~\ref{thm:dual_Fg} is uniquely
minimized over $(g,\tau)$ at $g=0$ and $\tau=0$.
\end{lemma}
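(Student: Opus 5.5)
The plan is to bound the dual objective in Theorem~\ref{thm:dual_Fg} from below by its value at $g=0$, $\tau=0$, and to show that equality forces $g=0$. Fix $F$ and $(\gamma,U)$. Only two ingredients depend on $(g,\tau)$: the term $a(F,g)+\tau$ in the objective, and the first LMI through $c(F,g)$. The second LMI and the terms $\gamma(\delta^2-\tr\hat\Sigma)+\tr(U\hat\Sigma)$ involve only $(F,\gamma,U)$.

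The first step is to note the two sign facts that drive everything. Since $M_t\succ 0$ (because $R_t\succ0$ and $S_{t+1}\succeq 0$), we have $a(F,g)=\sum_t g_t^\top M_t g_t\ge 0$, with equality iff $g=0$. Moreover, the first LMI contains a $1\times1$ diagonal block, so any feasible point has $\tau\ge 0$. Hence, for every feasible $(\gamma,\tau,U,g)$, the objective is at least $\gamma(\delta^2-\tr\hat\Sigma)+\tr(U\hat\Sigma)$, and strictly larger unless $g=0$ and $\tau=0$.

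The second step is to check that $g=0,\tau=0$ is feasible for the same $(\gamma,U)$. Note that $c(F,0)=0$. The first LMI then reduces to $\gamma I_d-\mathcal B(F)\succeq 0$, which is already implied by feasibility of the original point: it is the upper-left block of a PSD matrix. The second LMI is unchanged. So replacing $(g,\tau)$ by $(0,0)$ keeps feasibility and does not increase the objective. The decrease is strict whenever $g\neq 0$, or whenever $g=0$ and $\tau>0$.

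Taking the minimum over $(\gamma,U)$ then shows that the joint minimum over $(g,\tau,\gamma,U)$ is attained only with $g=0,\tau=0$. This is exactly the claimed uniqueness in $(g,\tau)$. Attainment of the outer minimum itself is guaranteed by Theorem~\ref{thm:dual_Fg}.

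The main point of care is uniqueness of $\tau$ once $g=0$. There, $\tau\ge 0$ and the objective is strictly increasing in $\tau$, so $\tau=0$ is forced at any minimizer. With $c=0$, $\tau=0$ also satisfies the LMI, by the Schur-complement argument of step two. No genuine obstacle is expected beyond confirming $M_t\succ 0$ from the Riccati recursion.
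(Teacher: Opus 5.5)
Your proposal is correct and follows essentially the same argument as the paper: for fixed $(\gamma,U)$, you use $a(F,g)\ge 0$ (strict unless $g=0$, since $M_t\succ 0$) and $\tau\ge 0$, and note that $(g,\tau)=(0,0)$ remains feasible because $c(F,0)=0$. Your extra step of passing to the joint minimum over $(\gamma,U)$, and your check that $M_t\succ0$ comes from $S_{t+1}\succeq 0$, only make explicit what the paper's proof leaves implicit.
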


\begin{proof}
Fix $F$, $\gamma$, and $U$ satisfying the second LMI in \eqref{eq:dual_Fg} and
$\gamma I_d\succeq \mathcal B(F)$. By Theorem~\ref{thm:dual_Fg}, it remains to
minimize
\[
a(F,g)+\tau
\quad\text{s.t.}\quad
\begin{pmatrix}
\gamma I_d-\mathcal B(F) & c(F,g)\\
c(F,g)^\top & \tau
\end{pmatrix}\succeq 0.
\]
Any feasible pair satisfies $a(F,g)=\sum_{t=0}^{T-1} g_t^\top M_t g_t\ge 0$ and $\tau\ge 0$
with equality if and only if $g_t=0$ for all $t$, since $M_t\succ 0$.
Moreover, for $g=0$ one has $c(F,0)=0$, so $(g,\tau)=(0,0)$ is feasible.
Hence the minimum value is $0$, attained uniquely at $g=0$ and $\tau=0$.
\end{proof}

\subsection{\texorpdfstring{Row-Sum Reduction for $F_{ts}$}{Row-Sum Reduction for F\_ts}}
Using Lemma~\ref{lemma:g_reduction}, one can restrict to $g_t=0$. Consequently, the regret minimization problem becomes
\begin{equation}\label{eq:dual_F_reduced}
\begin{aligned}
\mathcal R(F,0)=\min_{\gamma,U}\quad
&\gamma\bigl(\delta^2-\tr(\hat\Sigma)\bigr)+\tr(U\hat\Sigma)\\
\text{s.t.}\quad
&\gamma I_d\succeq \mathcal B(F),\quad U\in\mathbb S^d\\
&
\begin{pmatrix}
\gamma I_d-\mathcal A(F) & \gamma I_d\\
\gamma I_d & U
\end{pmatrix}\succeq 0.
\end{aligned}
\end{equation}

\begin{lemma}[Row-sum reduction in the dual]\label{lemma:F_reduction}
For every fixed collection of row-sums $\Lambda_t=\sum_{s=0}^{t-1}F_{ts}$,
the dual problem \eqref{eq:dual_F_reduced} is minimized by taking the blocks
$F_{ts}$ constant over $s$, i.e. $F_{ts}=\frac1t\Lambda_t$.
Consequently, if one defines
\begin{align*}
\mathcal A(\Lambda)&:=\sum_{t=1}^{T-1}\frac1t\,\Lambda_t^\top M_t\Lambda_t,\\
\mathcal B(\Lambda)&:=\bar H_0^\top M_0\bar H_0 + \sum_{t=1}^{T-1}(\Lambda_t-\bar H_t)^\top M_t(\Lambda_t-\bar H_t),
\end{align*}
then the row-sum-reduced dual problem is
\begin{equation}\label{eq:dual_row_sum_reduced}
\begin{aligned}
\min_{\Lambda_1,\dots,\Lambda_{T-1},\ \gamma,\ U}\quad
&\gamma\bigl(\delta^2-\tr(\hat\Sigma)\bigr)+\tr(U\hat\Sigma)\\
\text{s.t.}\quad
&\gamma I_d\succeq \mathcal B(\Lambda),\quad U\in\mathbb S^d\\
&
\begin{pmatrix}
\gamma I_d-\mathcal A(\Lambda) & \gamma I_d\\
\gamma I_d & U
\end{pmatrix}\succeq 0.
\end{aligned}
\end{equation}
\end{lemma}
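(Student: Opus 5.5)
The argument is a Jensen/convexity step for each row, combined with monotonicity of the feasible set of \eqref{eq:dual_F_reduced} in $\mathcal A(F)$.

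\textbf{Step 1: $\mathcal B$ depends only on the row sums.} In \eqref{eq:dual_F_reduced}, $\mathcal B(F)$ depends on $F$ only through the row sums $\Lambda_t$. At $t=0$ we have $\Lambda_0=0$, which gives the term $\bar H_0^\top M_0\bar H_0$. So $\mathcal B(F)=\mathcal B(\Lambda)$ as defined in the statement, and fixing the row sums fixes the first LMI.

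\textbf{Step 2: equal splitting minimizes $\mathcal A$.} The matrix $\mathcal A(F)$ enters only through the block $\gamma I_d-\mathcal A(F)$ of the second LMI. That LMI is monotone: if $\mathcal A'\preceq\mathcal A$ and $(\gamma,U)$ is feasible for $\mathcal A$, then it is feasible for $\mathcal A'$, with the same objective value. So it suffices to show that, for each $t\ge1$ and fixed $\Lambda_t$,
\[
\sum_{s=0}^{t-1}F_{ts}^\top M_tF_{ts}\;\succeq\;\frac1t\Lambda_t^\top M_t\Lambda_t
\]
in the Loewner order, with equality at $F_{ts}=\Lambda_t/t$.

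Write $F_{ts}=\Lambda_t/t+D_s$ with $\sum_s D_s=0$. Expanding, the cross terms are
\[
\frac1t\Lambda_t^\top M_t\sum_s D_s+\Big(\sum_s D_s\Big)^\top M_t\frac1t\Lambda_t=0 .
\]
The left side therefore equals
\[
\frac1t\Lambda_t^\top M_t\Lambda_t+\sum_s D_s^\top M_tD_s ,
\]
and the last sum is $\succeq 0$ because $M_t\succ0$. This is a matrix-valued Jensen inequality proved by completing the square.

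Summing over $t$ gives $\mathcal A(F)\succeq\mathcal A(\Lambda)$, with equality for the constant blocks $F_{ts}=\Lambda_t/t$.

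\textbf{Step 3: conclude the reduced problem.} For fixed row sums, the optimal value of \eqref{eq:dual_F_reduced} is nonincreasing as $\mathcal A$ decreases in the Loewner order, so the constant-block choice attains the minimum over all $F$ with those row sums. Minimizing jointly over $F$ is the same as first minimizing over $F$ with given row sums and then over $\Lambda_1,\dots,\Lambda_{T-1}$, which are free. This yields \eqref{eq:dual_row_sum_reduced}.

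\textbf{Main obstacle.} The step needing the most care is the monotonicity claim in Step 2. It must be argued directly on the LMI: $\gamma I-\mathcal A'\succeq\gamma I-\mathcal A$, so adding the PSD matrix $\operatorname{diag}(\mathcal A-\mathcal A',0)$ to a PSD block matrix keeps it PSD. Once that is stated, everything else is the completing-the-square identity.
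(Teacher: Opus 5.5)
Your proposal is correct and follows essentially the same route as the paper's proof. Both arguments observe that $\mathcal B(F)$ depends only on the row sums, and both use the completing-the-square identity (your $D_s=F_{ts}-\Lambda_t/t$) to get $\mathcal A(F)\succeq\mathcal A(\Lambda)$, with equality at constant blocks. Both then transfer feasibility of $(\gamma,U)$ by adding the PSD block-diagonal matrix with blocks $\mathcal A(F)-\mathcal A(\Lambda)$ and $0$ to the second LMI.
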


\begin{proof}
By Lemma~\ref{lemma:g_reduction}, for fixed $F$ the dual problem reduces to
\eqref{eq:dual_F_reduced}. Now fix the row-sums $\Lambda_t$.

The matrix $\mathcal B(F)$ depends on $F$ only through the row-sums, hence $\mathcal B(F)=\mathcal B(\Lambda)$.
Thus only the matrix $\mathcal A(F)$ depends on the individual blocks $F_{ts}$.
Using $\Lambda_t=\sum_{s=0}^{t-1}F_{ts}$,
\begin{align*}
\sum_{s=0}^{t-1}F_{ts}^\top M_tF_{ts} = 
\sum_{s=0}^{t-1}
\Bigl(F_{ts}-\tfrac1t\Lambda_t\Bigr)^\top
M_t
\Bigl(F_{ts}-\tfrac1t\Lambda_t\Bigr)
+\frac1t\,\Lambda_t^\top M_t\Lambda_t.
\end{align*}
Since $M_t\succeq 0$, this implies $\sum_{s=0}^{t-1}F_{ts}^\top M_tF_{ts}
\succeq
\frac1t\,\Lambda_t^\top M_t\Lambda_t$.
Summing over $t$ yields $\mathcal A(F)\succeq \mathcal A(\Lambda)$,
with equality for the choice $F_{ts}=\frac1t\Lambda_t$.
Now let $(\gamma,U)$ be feasible for \eqref{eq:dual_F_reduced}. Then
\begin{align*}
\begin{pmatrix}
\gamma I_d-\mathcal A(\Lambda) & \gamma I_d\\
\gamma I_d & U
\end{pmatrix} =
\begin{pmatrix}
\gamma I_d-\mathcal A(F) & \gamma I_d\\
\gamma I_d & U
\end{pmatrix}
+
\begin{pmatrix}
\mathcal A(F)-\mathcal A(\Lambda) & 0\\
0 & 0
\end{pmatrix}
\succeq 0.
\end{align*}
The last inequality uses the second LMI in \eqref{eq:dual_F_reduced} and the fact that $\mathcal A(F)-\mathcal A(\Lambda)\succeq 0$.
Hence every pair $(\gamma,U)$ feasible for a given $F$ with row-sums
$\Lambda_t$ is also feasible for the choice
$F_{ts}=\frac1t\Lambda_t$. Since the objective in \eqref{eq:dual_F_reduced}
does not depend on $F$, this shows that, for fixed row-sums, the dual problem
is minimized by taking $F_{ts}=\frac1t\Lambda_t$. Substituting this choice into \eqref{eq:dual_F_reduced} gives
\eqref{eq:dual_row_sum_reduced}.
\end{proof}

\subsection{Policy Representation}\label{sec:policy_interpretation_drro}
The reductions from Lemmas~\ref{lemma:g_reduction} and~\ref{lemma:F_reduction}, together with the policy parametrization
\eqref{eq:policy_parametrization_new} imply that the regret-optimal policy can be represented as
\begin{equation}\label{eq:policy_representation_drro}
  \begin{aligned}
  u^{\mathrm{DRRO}}_0 &= K_0 x_0 + \bar H_0\hat\mu\\
  u^{\mathrm{DRRO}}_t &= K_t x_t + \bar H_t\hat\mu + \Lambda_t(\bar w_{0:t-1} - \hat\mu)
  \end{aligned}
\end{equation}
where $\bar w_{0:t-1} = \frac{1}{t}\sum_{s=0}^{t-1}w_s$, $t \ge 1$, is the running mean of the disturbance noise. 
Since $u^{\mathrm{ref}}_t = K_t x_t + \bar H_t\hat\mu$, the DRRO optimal policy \eqref{eq:policy_representation_drro}
can be interpreted as the optimal policy for the reference measure $(\hat\mu, \hat\Sigma)$ plus the correction term
$\Lambda_t(\bar w_{0:t-1} - \hat\mu)$. This suggests a built-in learning interpretation:
If $\Lambda_t$ approaches $\bar H_t$, then the empirical-mean correction approaches
the feedforward correction $\bar H_t(\mu-\hat\mu)$ that turns the reference
controller $K_t x_t + \bar H_t\hat \mu$ into the certainty-equivalent controller $K_t x_t + \bar H_t\mu$ for the true stage law $(\mu, \Sigma)$.
We examine this mechanism empirically in Section~\ref{sec:experiments}.

Additionally, the initial DRRO control coincides with the corresponding
certainty-equivalent control for the reference measure, that is, $u^{\mathrm{DRRO}}_0=u^{\mathrm{ref}}_0$.
Hence in the static one-stage setting, the certainty-equivalent control for the reference measure is
already optimal, consistent with \cite{fiechtner2025wasserstein}.

\subsection{Convex SDP Reformulation}

The reduced problem in \eqref{eq:dual_row_sum_reduced} is still non-convex in the decision variables $\Lambda$. The next theorem
shows how to transform it into a convex SDP.

\begin{theorem}\label{thm:convex_row_sum_sdp}
Problem
\eqref{eq:dual_row_sum_reduced} is equivalent to the SDP
\begin{equation}\label{eq:convex_row_sum_sdp}
\begin{aligned}
&\min_{\substack{\Lambda_1,\dots,\Lambda_{T-1},\,\gamma,\,U,\\V_1,\dots,V_{T-1},\\W_1,\dots,W_{T-1}}}
\quad
\gamma\bigl(\delta^2-\tr(\hat\Sigma)\bigr)+\tr(U\hat\Sigma)\\
\text{s.t.}\quad
&\gamma I_d \succeq \bar H_0^\top M_0\bar H_0 + \sum_{t=1}^{T-1} W_t,\\
&
\begin{pmatrix}
\gamma I_d-\sum_{t=1}^{T-1}V_t & \gamma I_d\\
\gamma I_d & U
\end{pmatrix}\succeq 0,\\
&
\begin{pmatrix}
t\,M_t^{-1} & \Lambda_t\\
\Lambda_t^\top & V_t
\end{pmatrix}\succeq 0,
\quad t=1,\dots,T-1,\\
&
\begin{pmatrix}
M_t^{-1} & \Lambda_t-\bar H_t\\
(\Lambda_t-\bar H_t)^\top & W_t
\end{pmatrix}\succeq 0,
\quad t=1,\dots,T-1,\\
&U,V_t,W_t\in\mathbb S^d.
\end{aligned}
\end{equation}
\end{theorem}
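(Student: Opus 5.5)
The plan is a standard epigraph lifting. Each nonconvex quadratic term in $\Lambda_t$ is replaced by a matrix variable bounding it from above. Schur complements make these bounds LMIs, and monotonicity of the two original constraints in $\mathcal A$ and $\mathcal B$ shows that nothing is lost.

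By the Schur complement lemma, since $M_t^{-1}\succ 0$ (as $M_t\succ 0$):
\begin{itemize}
\item the third LMI in \eqref{eq:convex_row_sum_sdp} is equivalent to $V_t\succeq \Lambda_t^\top (tM_t^{-1})^{-1}\Lambda_t=\frac1t\Lambda_t^\top M_t\Lambda_t$;
\item the fourth LMI is equivalent to $W_t\succeq(\Lambda_t-\bar H_t)^\top M_t(\Lambda_t-\bar H_t)$.
\end{itemize}
These are exactly the summands of $\mathcal A(\Lambda)$ and $\mathcal B(\Lambda)$ from Lemma~\ref{lemma:F_reduction}. All constraints in \eqref{eq:convex_row_sum_sdp} are affine in the joint variables $(\Lambda,\gamma,U,V,W)$, so the problem is a genuine SDP. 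It remains to show that its optimal value equals that of \eqref{eq:dual_row_sum_reduced}.

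\emph{Direction 1: every feasible point of \eqref{eq:dual_row_sum_reduced} lifts.} Take $(\Lambda,\gamma,U)$ feasible for \eqref{eq:dual_row_sum_reduced}. Set $V_t:=\frac1t\Lambda_t^\top M_t\Lambda_t$ and $W_t:=(\Lambda_t-\bar H_t)^\top M_t(\Lambda_t-\bar H_t)$. Then the Schur LMIs hold with equality in the Schur complement. We also get $\sum_t V_t=\mathcal A(\Lambda)$ and $\bar H_0^\top M_0\bar H_0+\sum_t W_t=\mathcal B(\Lambda)$, so the first two constraints of \eqref{eq:convex_row_sum_sdp} coincide with those of \eqref{eq:dual_row_sum_reduced}. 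The objective is unchanged, so the SDP value is at most the value of \eqref{eq:dual_row_sum_reduced}.

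\emph{Direction 2: every feasible point of the SDP projects.} Take $(\Lambda,\gamma,U,V,W)$ feasible for \eqref{eq:convex_row_sum_sdp}.
\begin{itemize}
\item For the $\mathcal B$ constraint, summing the Schur bounds gives $\gamma I_d\succeq \bar H_0^\top M_0\bar H_0+\sum_tW_t\succeq\mathcal B(\Lambda)$.
\item For the block LMI, $\sum_tV_t\succeq\mathcal A(\Lambda)$, so
\[
\begin{pmatrix}\gamma I_d-\mathcal A(\Lambda)&\gamma I_d\\ \gamma I_d&U\end{pmatrix}
=\begin{pmatrix}\gamma I_d-\sum_tV_t&\gamma I_d\\ \gamma I_d&U\end{pmatrix}
+\begin{pmatrix}\sum_tV_t-\mathcal A(\Lambda)&0\\0&0\end{pmatrix}\succeq 0.
\]
This is the same padding argument used in the proof of Lemma~\ref{lemma:F_reduction}.
\end{itemize}
Hence $(\Lambda,\gamma,U)$ is feasible for \eqref{eq:dual_row_sum_reduced} with the same objective value. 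The two problems therefore have equal optimal values, and optimizers correspond under projection and lifting.

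I do not expect a real obstacle here. The only care points are that $t M_t^{-1}$ is positive definite, which makes the Schur step exact, and that the objective does not involve $V$ or $W$, so the slack in the epigraph variables never matters.
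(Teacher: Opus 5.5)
Your proposal is correct and follows the same epigraph-lifting argument as the paper: Schur complements turn the bounds $V_t\succeq\frac1t\Lambda_t^\top M_t\Lambda_t$ and $W_t\succeq(\Lambda_t-\bar H_t)^\top M_t(\Lambda_t-\bar H_t)$ into LMIs, and choosing $V_t,W_t$ equal to these quadratic terms shows the lifting is tight. Your padding argument for the direction from SDP-feasible to feasible for \eqref{eq:dual_row_sum_reduced} makes explicit a monotonicity step that the paper's proof leaves implicit.
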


\begin{proof}
Introduce auxiliary variables $V_t,W_t\in\mathbb S^d$. For each $t\geq 1$, replace the inequalities
\[
\frac1t\,\Lambda_t^\top M_t\Lambda_t\preceq V_t,
\qquad
(\Lambda_t-\bar H_t)^\top M_t(\Lambda_t-\bar H_t)\preceq W_t
\]
by the equivalent Schur-complement LMIs
\[
\begin{pmatrix}
t\,M_t^{-1} & \Lambda_t\\
\Lambda_t^\top & V_t
\end{pmatrix}\succeq 0,
\qquad
\begin{pmatrix}
M_t^{-1} & \Lambda_t-\bar H_t\\
(\Lambda_t-\bar H_t)^\top & W_t
\end{pmatrix}\succeq 0.
\]
Then $\sum_{t=1}^{T-1} V_t\succeq \mathcal A(\Lambda)$ and $\bar H_0^\top M_0\bar H_0 + \sum_{t=1}^{T-1} W_t\succeq \mathcal B(\Lambda)$, so replacing
$\mathcal A(\Lambda)$ and $\mathcal B(\Lambda)$ by these lifted variables yields
an SDP whose constraints are affine in $\Lambda$. Conversely, at any feasible
point of the original problem one can choose
$V_t=\frac1t\,\Lambda_t^\top M_t\Lambda_t$ and 
$W_t=(\Lambda_t-\bar H_t)^\top M_t(\Lambda_t-\bar H_t)$,
showing that the lifting is tight.
\end{proof}

\section{DRO SDP Reformulation}

Using similar arguments as for the DRRO case it can be shown that the DRO-optimal policy is given by
\begin{equation}
\begin{aligned}\label{eq:dro_optimal_policy}
  u^{\mathrm{DRO}}_0 &= K_0 x_0 + \bar H_0 \theta\\
  u^{\mathrm{DRO}}_t &= K_t x_t + \bar H_t \theta + \Lambda_t(\bar w_{0:t-1} - \theta), \qquad t\geq 1
\end{aligned}
\end{equation}
with the center $\theta$ given by
\begin{align}
  \theta = \hat\mu + (\gamma I_d - N_0)^\dagger (P_0^\top x_0 + N_0\hat\mu).
\end{align}
Here $P_0$ and $N_0$ are given by the recursions in \eqref{eq:def_P} and \eqref{eq:def_N},
and $(\Lambda_1,\dots,\Lambda_{T-1},\gamma,\rho,U)$ is an optimal solution of

\begin{equation}\label{eq:dro-F-reduced-problem}
\begin{aligned}
\inf_{\substack{\Lambda_1,\dots,\Lambda_{T-1},\\ \gamma,\ \rho,\ U}}\quad
& x_0^\top S_0x_0
+
2x_0^\top P_0\hat\mu
+
\hat\mu^\top N_0\hat\mu+\rho +\gamma(\delta^2-\tr(\hat\Sigma))+\tr(U\hat\Sigma)\\
\text{s.t.}\quad
& \gamma I_d-N_0 \succeq \mathcal B(\Lambda), \quad U\in\mathbb S^d\\
&
\begin{pmatrix}
\gamma I_d-N_0 & P_0^\top x_0 + N_0\hat\mu\\
(P_0^\top x_0 + N_0\hat\mu)^\top & \rho
\end{pmatrix}\succeq 0,\\
&
\begin{pmatrix}
\gamma I_d-\Gamma_0-\mathcal A(\Lambda) & \gamma I_d\\
\gamma I_d & U
\end{pmatrix}\succeq 0.
\end{aligned}
\end{equation}

Again, a convex SDP reformulation is also available:
\begin{equation}\label{eq:dro-F-reduced-problem-convex}
\begin{aligned}
&\min_{\substack{\Lambda_1,\dots,\Lambda_{T-1},\,\gamma,\,\rho, \,U,\\V_1,\dots,V_{T-1},\\W_1,\dots,W_{T-1}}}\quad
x_0^\top S_0x_0
+
2x_0^\top P_0\hat\mu
+
\hat\mu^\top N_0\hat\mu +\rho+\gamma(\delta^2-\tr(\hat\Sigma))+\tr(U\hat\Sigma)\\
\text{s.t.}\quad
&
\begin{pmatrix}
\gamma I_d-N_0 & P_0^\top x_0 + N_0\hat\mu\\
(P_0^\top x_0 + N_0\hat\mu)^\top & \rho
\end{pmatrix}\succeq 0,\\
& \gamma I_d-N_0 \succeq \bar H_0^\top M_0 \bar H_0 + \sum_{t=1}^{T-1}W_t,\\
&
\begin{pmatrix}
\gamma I_d-\Gamma_0-\sum_{t=1}^{T-1}V_t & \gamma I_d\\
\gamma I_d & U
\end{pmatrix}\succeq 0,\\
&
\begin{pmatrix}
t\,M_t^{-1} & \Lambda_t\\
\Lambda_t^\top & V_t
\end{pmatrix}\succeq 0,
\quad t=1,\dots,T-1,\\
&
\begin{pmatrix}
M_t^{-1} & \Lambda_t-\bar H_t\\
(\Lambda_t-\bar H_t)^\top & W_t
\end{pmatrix}\succeq 0,
\quad t=1,\dots,T-1,\\
&U,\ V_t,\ W_t\in\mathbb S^d.
\end{aligned}
\end{equation}

As in the DRRO case, an analogous learning interpretation for DRO would require
its coefficients $\Lambda_t$ to approach $\bar H_t$. We compare this behavior
with DRRO empirically in Section~\ref{sec:experiments}.

\section{Worst-Case Distributions}

We now study properties of the worst-case distributions. While in DRO
problems the adversarial distribution is usually unique, the following theorem
shows that in the case of DRRO the optimal policy always admits at least two
adversarial distributions.

\begin{theorem}
For $\delta>0$, let $\pi^\star:=u^{\mathrm{DRRO}}$ denote an optimal policy for
\eqref{eq:cdc-drro-linear}. Then there exist at least two worst-case moment
pairs $(\mu,\Sigma)\in\cW$ for $\pi^\star$ (and thus at least two worst-case distributions for $\pi^\star$).
\end{theorem}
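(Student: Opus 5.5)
The plan is to exploit the sign symmetry $z\mapsto -z$ of the inner problem. By Lemma~\ref{lemma:g_reduction}, the optimal policy has $g=0$, hence $c(F,0)=0$. For fixed $F$ (taken, by Lemma~\ref{lemma:F_reduction}, row-sum reduced with optimal $\Lambda$), the worst-case regret \eqref{eq:controller_objective_in_Fg} then equals
\[
\max_{z,\Sigma}\ z^\top\mathcal B(\Lambda)z+\tr(\mathcal A(\Lambda)\Sigma)\quad\text{s.t. } \|z\|_2^2+\mathsf B^2(\Sigma,\hat\Sigma)\le\delta^2 .
\]
This objective and the feasible set are both invariant under $z\mapsto -z$. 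So whenever a maximizer $(z^\star,\Sigma^\star)$ has $z^\star\neq 0$, the pair $(-z^\star,\Sigma^\star)$ is a second, distinct maximizer. In the original coordinates these are the moment pairs $(\hat\mu\pm z^\star,\Sigma^\star)\in\cW$.

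Existence of a maximizer comes first. The feasible set is compact: $z$ is bounded, and $\mathcal S_\delta$ is compact, as already used in the proof of Theorem~\ref{thm:dual_Fg}. The objective is continuous, since the Bures distance is continuous. So the maximum is attained.

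The main step is to show that some maximizer has $z^\star\neq0$. We use the first-stage term $\bar H_0^\top M_0\bar H_0$ in $\mathcal B(\Lambda)$, which cannot be corrected because $\Lambda_0=0$.

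\emph{Case $\bar H_0\neq 0$.} Then $\mathcal B(\Lambda)\succeq \bar H_0^\top M_0\bar H_0\neq 0$. Suppose for contradiction that every maximizer has $z=0$, and take one, $(0,\Sigma^\star)$. Perturbing along the constraint trades Bures budget for mean budget. Let $v$ be a top eigenvector of $\mathcal B(\Lambda)$, so $\lambda_{\max}(\mathcal B)>0$.
\begin{itemize}
\item If the constraint is slack at $\Sigma^\star$, i.e.\ $\mathsf B^2(\Sigma^\star,\hat\Sigma)<\delta^2$, then adding $z=\epsilon v$ strictly increases the objective. This contradicts optimality, so the constraint is active.
\item If $\mathsf B^2(\Sigma^\star,\hat\Sigma)=\delta^2$, move $\Sigma$ along the geodesic toward $\hat\Sigma$, i.e.\ $\Sigma_s$ with $\mathsf B(\Sigma_s,\hat\Sigma)=(1-s)\delta$. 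This frees budget $\approx 2s\delta^2$ for $\|z\|^2$. The gain $2s\delta^2\lambda_{\max}(\mathcal B)$ is first order in $s$. The loss $\tr(\mathcal A(\Sigma^\star-\Sigma_s))$ is also first order, so the comparison is not automatic.
\end{itemize}
To settle the active case we work through the dual \eqref{eq:dual_F_reduced}. At the optimal $\Lambda$, suppose the constraint $\gamma I\succeq\mathcal B(\Lambda)$ is inactive, i.e.\ $\gamma>\lambda_{\max}(\mathcal B)$. Then $\Lambda$ could be perturbed to decrease the objective, or else complementary slackness forces the mean multiplier to vanish. Concretely, we argue that at the saddle point the dual variable $\lambda$ of the mean subproblem equals $\gamma$. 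We also argue that if $z^\star=0$ is the only maximizer, then $\lambda$ can be lowered strictly below $\gamma$ without loss. This makes the constraint $\lambda I\succeq\mathcal B$ slack, so $\Lambda_t$ can move toward $\bar H_t$, which strictly reduces $\mathcal B(\Lambda)$ while increasing $\mathcal A(\Lambda)$ only to second order, since $\partial\mathcal A$ at the optimum is balanced. The cleaner route, which I would try first, is to use the optimality conditions of the SDP \eqref{eq:dual_row_sum_reduced} with respect to $\Lambda$. The primal certificate for $\mathcal B$ (a PSD matrix $Z\succeq0$ with $\tr Z=\|z\|^2$ playing the role of $zz^\top$) must be nonzero. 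Otherwise stationarity in $\Lambda_t$ gives $\tfrac1t M_t\Lambda_t\Sigma$-type terms with nothing to balance them, forcing $\Lambda_t=0$ for $t\ge1$. But then $\mathcal B(0)\succeq \bar H_0^\top M_0\bar H_0\ne0$, and the dual reasoning with slack $\gamma$ yields a contradiction. A nonzero $Z$ gives a maximizer with $z\neq0$, via a rank-one eigenvector of $Z$, possibly after the S-lemma's rank reduction.

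\emph{Case $\bar H_0=0$.} This case, where all $\bar H_t$ vanish, is degenerate. There $\Lambda=0$ and $\mathcal B=0$, so any $z$ with $(z,\Sigma^\star)$ satisfying $\|z\|^2+\mathsf B^2(\Sigma^\star,\hat\Sigma)\le\delta^2$ is optimal. Because $\delta>0$, one can use $z$ of small norm with $\Sigma^\star$ from an interior-ish point, or, if $\mathcal A=0$ as well, every point of $\cW$ is worst-case. Either way at least two worst-case pairs exist.

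The main obstacle is the nondegeneracy step ruling out $z^\star=0$ for all maximizers in the active-Bures case; I expect to close it via the KKT stationarity in $\Lambda$ as sketched.
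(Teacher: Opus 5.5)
Your sign-symmetry idea is sound as far as it goes. Every optimal policy has $g=0$, since $\mathcal R(F,g)\ge a(F,g)+\mathcal R(F,0)$. Hence $c=0$, the set of inner maximizers is invariant under $z\mapsto -z$, and a single maximizer with $z^\star\neq0$ yields the two laws $(\hat\mu\pm z^\star,\Sigma^\star)$.

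The gap is the step that carries all the weight, which you flag yourself: showing that some maximizer has $z^\star\neq0$. The KKT route is not a proof as written. It needs a constraint qualification for the joint SDP, and an unproved link between a multiplier $Z$ and actual maximizers of the inner problem. The perturbation you propose also points the wrong way. If every worst case has $z=0$, then $\mathcal B$ is invisible to first order. Moving $\Lambda_t$ toward $\bar H_t$ then gains nothing, while in general it increases $\mathcal A(\Lambda)$ at first order.

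Your case split is also incorrect: $\bar H_0=0$ does not force $\bar H_t=0$ for all $t$. Take $n=m=d=1$, $T=2$, $A=-1$, $B=\Xi=1$, $Q_1=0$, $Q_2=q>0$. The recursions give $S_1\Xi+P_1=0$, hence $\bar H_0=0$, but $\bar H_1=-q/(R_1+q)\neq0$. Here neither of your two cases applies.

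The missing idea is a simple scaling perturbation. Suppose $\mathcal R(F^\star,0)>0$ and every maximizer has the form $(0,\Sigma)$. For $F_\epsilon:=(1-\epsilon)F^\star$ one has $\mathcal A(F_\epsilon)=(1-\epsilon)^2\mathcal A(F^\star)$. The feasible set of $(z,\Sigma)$ is compact and the objective is smooth in $F$, so Danskin's theorem applies. It gives the right derivative of $\epsilon\mapsto\mathcal R(F_\epsilon,0)$ at $\epsilon=0$ as the maximum over these maximizers of $-2\tr(\mathcal A(F^\star)\Sigma)=-2\mathcal R(F^\star,0)<0$, contradicting optimality. If instead $\mathcal R(F^\star,0)=0$, every point of $\cW$ is worst-case.

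This completes your argument for every optimal $F$, not only the row-sum-reduced one, with no case distinction on $\bar H_0$. It also gives more than the statement: worst-case laws come in pairs $(\hat\mu\pm z^\star,\Sigma^\star)$, matching the boundary case of Theorem~\ref{thm:wc_recipe}.

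The paper instead avoids symmetry altogether. If the worst case $(z^\star,\Sigma^\star)$ were unique, Danskin would make $\mathcal R$ differentiable at the optimum with $\nabla\mathcal R=\nabla f_{z^\star,\Sigma^\star}=0$. The optimum would then minimize the convex function $f_{z^\star,\Sigma^\star}$, whose minimum value is $0$ because the comparator ($F=0$, $g_t=\bar H_t z^\star$) is admissible. This contradicts $\mathcal R>0$.
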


\begin{proof}
Let $(F^\star,g^\star)$ denote the
representation of $\pi^\star$ under \eqref{eq:policy_parametrization_new}.
If $\mathcal R(F^\star,g^\star)=0$, then it holds that $
0=\sup_{(\mu,\Sigma)\in\cW}\bigl(J(\pi^\star;\mu,\Sigma)-J^\star(\mu,\Sigma)\bigr)$.
Because the regret is nonnegative for every $(\mu,\Sigma)\in\cW$, it must vanish identically on $\cW$. Hence every admissible stage law is worst-case.

Now assume instead that $\mathcal R(F^\star,g^\star)>0$. For each feasible
pair $(z,\Sigma)$, denote by $f_{z,\Sigma}(F,g)$ the objective of \eqref{eq:controller_objective_in_Fg}.
$f_{z,\Sigma}$ is a finite convex
differentiable quadratic function of $(F,g)$. Moreover, for the stage law with
mean $\mu=\hat\mu+z$ and covariance $\Sigma$, the corresponding comparator is
\[
u_t^{\mathrm{cmp}}(x)=K_t x+\bar H_t\mu
=
K_t x+\bar H_t\hat\mu+\bar H_t z.
\]
This policy is admissible by taking
$F=0$ and $g_t=\bar H_t z$. Therefore $\inf_{F,g} f_{z,\Sigma}(F,g)=0$
for every feasible $(z,\Sigma)$.

Since $\mathcal R$ is the pointwise supremum of the convex functions
$f_{z,\Sigma}$, it is convex. Optimality of $(F^\star,g^\star)$ therefore gives $0\in \partial \mathcal R(F^\star,g^\star)$.
Suppose, for contradiction, that the worst-case stage law at
$(F^\star,g^\star)$ were unique, say $(z^\star,\Sigma^\star)$. Then Danskin's
theorem implies that $\mathcal R$ is differentiable at $(F^\star,g^\star)$ and
$\nabla \mathcal R(F^\star,g^\star)
=
\nabla f_{z^\star,\Sigma^\star}(F^\star,g^\star).
$
Hence
$
\nabla f_{z^\star,\Sigma^\star}(F^\star,g^\star)=0.
$
Because $f_{z^\star,\Sigma^\star}$ is convex, this means that
$(F^\star,g^\star)$ minimizes it, so
$f_{z^\star,\Sigma^\star}(F^\star,g^\star)=0$.
Consequently, $
\mathcal R(F^\star,g^\star)
=
\sup_{(z,\Sigma)} f_{z,\Sigma}(F^\star,g^\star)
=
f_{z^\star,\Sigma^\star}(F^\star,g^\star)
=0$,
contradicting the assumption that $\mathcal R(F^\star,g^\star)>0$.
Therefore the worst-case stage law cannot be unique.
\end{proof}

We now discuss how to construct the distributions $(\mu, \Sigma)$ that maximize the
regret. Fix a policy with parameters $(F,g)$ and, for notational brevity, write
$a:=a(F,g)$, $c:=c(F,g)$, $\mathcal A:=\mathcal A(F)$, and
$\mathcal B:=\mathcal B(F)$.
The following lemma shows that the dual can be transformed into a scalar minimization problem and also characterizes the cases where regret is zero. 

\begin{lemma}\label{lem:scalarized_dual_wc}
Let $\delta>0$. The regret-optimal value is zero if and only if $\bar H_t=0$ for all $t$, in
which case the unique regret-optimal controller is $u_t=K_tx_t$. Consequently, if $\bar H_t\neq0$ for some $t$, then
$\mathcal R(F,g)>0$ for every $(F,g)$.

In this nondegenerate case, any feasible
$\gamma>0$ in \eqref{eq:dual_Fg} satisfies $\gamma>\alpha$ and
$\gamma\ge\beta$, where $\alpha:=\lambda_{\max}(\mathcal A)$ and
$\beta:=\lambda_{\max}(\mathcal B)$.
Additionally, \eqref{eq:dual_Fg} reduces to $\mathcal R(F,g)=\inf_{\gamma\in\mathcal D}\Psi(\gamma)$, where
\begin{align*}
\mathcal D&=
\begin{cases}
(\alpha,\infty), & \alpha\ge \beta,\\
[\beta,\infty), & \alpha<\beta\ \text{and }c\in\mathrm{Im}(\beta I_d - \mathcal{B}),\\
(\beta,\infty), & \alpha<\beta\ \text{and }c\notin\mathrm{Im}(\beta I_d - \mathcal{B}),
\end{cases}\\
\Psi(\gamma)&=
a+\gamma(\delta^2-\tr(\hat\Sigma))
+c^\top(\gamma I_d-\mathcal B)^{-1}c +\gamma^2\tr\!\bigl((\gamma I_d-\mathcal A)^{-1}\hat\Sigma\bigr), \qquad \gamma>\beta,
\end{align*}
while, when admissible,
\begin{align}
\Psi(\beta)&=a+\beta(\delta^2-\tr(\hat\Sigma))+c^\top(\beta I_d-\mathcal B)^\dagger c +\beta^2\tr\!\bigl((\beta I_d-\mathcal A)^{-1}\hat\Sigma\bigr).
\end{align}
Equivalently, $\gamma=\beta$ is admissible iff $\alpha<\beta$ and $c\in\mathrm{Im}(\beta I_d - \mathcal{B})$. Moreover, if $\hat\Sigma\succ0$, then the infimum is attained.
\end{lemma}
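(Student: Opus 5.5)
We split the argument into the zero-regret characterization and the scalar reduction of the dual.

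\emph{Zero regret.} If $\bar H_t=0$ for all $t$, take $F=0,g=0$. Then $a=0$, $c=0$, $\mathcal A=0$ and $\mathcal B=\sum_t\bar H_t^\top M_t\bar H_t=0$, so the primal objective in \eqref{eq:controller_objective_in_Fg} vanishes identically. Hence $\mathcal R(0,0)=0$, and the policy is $u_t=K_tx_t$.

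For uniqueness, suppose $\mathcal R(F,g)=0$. Every feasible $(z,\Sigma)$ then has objective $\le0$, while regret is nonnegative by the advantage identity \eqref{eq:cdc-advantage}.
\begin{itemize}
\item Taking $z=0,\Sigma=\hat\Sigma$ gives $a=0$, hence $g=0$ and $c=0$.
\item Taking $z=0$ and $\Sigma=\hat\Sigma+\epsilon I$ (feasible for small $\epsilon>0$, since $\delta>0$) gives $\tr(\mathcal A)=0$. Thus $\mathcal A=0$, and since $M_t\succ0$, every $F_{ts}=0$.
\item Then $\mathcal B=\sum_t\bar H_t^\top M_t\bar H_t$. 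Taking $z\ne0$ small gives $z^\top\mathcal Bz\le0$ for all $z$, so $\mathcal B=0$ and therefore every $\bar H_t=0$.
\end{itemize}
This argument also shows that if some $\bar H_t\ne0$, then for every $(F,g)$ the regret is positive: any $(F,g)$ with zero regret would force all $\bar H_t=0$.

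\emph{Bounds on $\gamma$.} The first LMI in \eqref{eq:dual_Fg} gives $\gamma I\succeq\mathcal B$, so $\gamma\ge\beta$.

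For the second LMI, let $v$ be a unit eigenvector of $\mathcal A$ with eigenvalue $\alpha$. Apply the block LMI to the vector $(v,-sv)$:
\[
(\gamma-\alpha)-2s\gamma+s^2v^\top Uv\ge0\quad\text{for all } s.
\]
If $\gamma=\alpha$, the left side is linear with slope $-2\gamma<0$ near $s=0$, which is impossible for $\gamma>0$. Hence the $(1,1)$ block must be positive definite, i.e.\ $\gamma>\alpha$. The same argument shows that any kernel vector of $\gamma I-\mathcal A$ paired with the off-diagonal $\gamma I$ forces positive definiteness whenever $\gamma>0$.

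We must also rule out $\gamma=0$. If $\gamma=0$, then $\mathcal B\preceq0$, so $\mathcal B=0$ and all $\Lambda_t=\bar H_t$, including $\bar H_0=0$ since $\Lambda_0=0$. Then $c=\sum_t(\Lambda_t-\bar H_t)^\top M_tg_t=0$, and $\mathcal A\preceq0$ forces $F=0$. Together these give $\bar H_t=0$ for all $t$, contradicting nondegeneracy. So feasible $\gamma$ are positive and satisfy $\gamma>\alpha$, $\gamma\ge\beta$.

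\emph{Eliminating $\tau$ and $U$.} For fixed $\gamma>\alpha$, the second LMI is equivalent by Schur complement to $U\succeq\gamma^2(\gamma I-\mathcal A)^{-1}$. Since $\hat\Sigma\succeq0$, the term $\tr(U\hat\Sigma)$ is minimized at equality, with value $\gamma^2\tr((\gamma I-\mathcal A)^{-1}\hat\Sigma)$.

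For the first LMI:
\begin{itemize}
\item If $\gamma>\beta$, the Schur complement gives minimal $\tau=c^\top(\gamma I-\mathcal B)^{-1}c$.
\item If $\gamma=\beta$, the generalized Schur complement requires $c\in\mathrm{Im}(\beta I-\mathcal B)$, with minimal $\tau=c^\top(\beta I-\mathcal B)^\dagger c$.
\end{itemize}
Combining these cases with $\gamma>\alpha$ yields exactly the domain $\mathcal D$ and the function $\Psi$ in the statement:
\begin{itemize}
\item When $\alpha\ge\beta$, the condition $\gamma>\alpha$ makes $\gamma=\beta$ unavailable, so $\mathcal D=(\alpha,\infty)$.
\item When $\alpha<\beta$, the point $\gamma=\beta$ is included if and only if the range condition on $c$ holds.
\end{itemize}
This establishes the equivalence claimed for admissibility of $\gamma=\beta$.

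\emph{Attainment (main obstacle).} Assume $\hat\Sigma\succ0$. As $\gamma\downarrow\alpha$, we have $\tr((\gamma I-\mathcal A)^{-1}\hat\Sigma)\ge v^\top\hat\Sigma v/(\gamma-\alpha)\to\infty$, and $\alpha\ge0$ with $\gamma>0$ keeps the prefactor $\gamma^2$ bounded away from zero, so $\Psi\to\infty$.

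The growth as $\gamma\to\infty$ needs an expansion. Write $\gamma^2(\gamma I-\mathcal A)^{-1}=\gamma I+\mathcal A+O(1/\gamma)$. Then
\[
\Psi(\gamma)=\gamma\delta^2+O(1)\to\infty\qquad(\gamma\to\infty),
\]
because the $-\gamma\tr\hat\Sigma$ term cancels against $\gamma\tr\hat\Sigma$. This cancellation is the delicate point.

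Near $\beta$ there are two cases. If $c\notin\mathrm{Im}(\beta I-\mathcal B)$, then $c^\top(\gamma I-\mathcal B)^{-1}c\to\infty$ as $\gamma\downarrow\beta$. If the range condition holds, $\Psi$ extends continuously to $\beta$, since $c^\top(\gamma I-\mathcal B)^{-1}c\to c^\top(\beta I-\mathcal B)^\dagger c$.

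In every case $\Psi$ is continuous on $\mathcal D$ and coercive at the open endpoints and at infinity. Its minimum over a compact sublevel set is therefore attained.
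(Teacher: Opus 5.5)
Your proposal is correct. For the scalarization and attainment parts it follows the paper's argument. Schur complements eliminate $U$ (with $U^\star=\gamma^2(\gamma I_d-\mathcal A)^{-1}$) and $\tau$ (generalized Schur complement at $\gamma=\beta$). The domain $\mathcal D$ is read off from $\gamma>\alpha$, $\gamma\ge\beta$ and the range condition. Attainment comes from $\Psi(\gamma)=\delta^2\gamma+O(1)$ at infinity together with blow-up at the open endpoints. Your test vector $(v,-sv)$ for $\gamma>\alpha$ is an explicit version of the paper's range inclusion $\mathrm{Im}(\gamma I_d)\subseteq\mathrm{Im}(\gamma I_d-\mathcal A)$. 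Your exclusion of $\gamma=0$ is the paper's observation $\max\{\alpha,\beta\}>0$ spelled out.

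The genuinely different step is the zero-regret characterization. The paper argues on the dual side: zero value forces the optimal $\gamma$ to be $0$, and the LMIs then give $\mathcal A=\mathcal B=0$. That is short, but it is phrased in terms of an optimal dual $\gamma$ and largely left implicit.

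You instead argue in the primal, evaluating the inner maximization at the feasible points $(0,\hat\Sigma)$, $(0,\hat\Sigma+\epsilon I_d)$ and $(z,\hat\Sigma)$. This is fully elementary and needs only nonnegativity of regret and continuity of $\mathsf B$. It also gives something extra. The same kind of test points yield
$\mathcal R(F,g)\ge\max\{\epsilon\tr\mathcal A(F),\ \delta^2\lambda_{\max}(\mathcal B(F))\}$,
taking $z=\pm\delta v$ with $v$ a top eigenvector of $\mathcal B$ and the sign chosen so that $z^\top c\ge0$. So even a minimizing sequence with $\mathcal R(F^k,g^k)\to0$ forces $F^k\to0$, and hence $\sum_t\bar H_t^\top M_t\bar H_t=0$. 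This settles a small point that both your write-up and the paper's pass over: ``optimal value zero'' refers to an infimum, while both arguments start from a policy that attains zero regret.

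One justification needs repair. As $\gamma\downarrow\alpha$, the prefactor $\gamma^2$ is not kept away from zero merely by $\gamma>0$. If $\alpha$ were $0$, your lower bound $\gamma^2 v^\top\hat\Sigma v/(\gamma-\alpha)=\gamma\, v^\top\hat\Sigma v$ would tend to $0$. The correct reason is that $\alpha$ is an open endpoint of $\mathcal D$ only when $\alpha\ge\beta$. Nondegeneracy, which you already established when excluding $\gamma=0$, then gives $\alpha=\max\{\alpha,\beta\}>0$.
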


\begin{proof}
If $\bar H_t=0$ for all $t$, then the advantage identity \eqref{eq:cdc-advantage} shows that picking $u_t = K_t x_t$
yields zero regret, so the regret-optimal value is zero. Conversely, assume that $\mathcal{R}(F, g) = 0$ for some policy $(F,g)$. It is straightforward to see that the dual \eqref{eq:dual_Fg}
can only have optimal value zero if the optimal $\gamma$ is zero. The two LMIs then however imply that $\mathcal{A} = 0 = \mathcal{B}$ which is only possible if $\bar H_t=0$. Finally, zero optimal value also forces $a=0$, hence $g=0$, while $\mathcal{A}=0$ forces $F=0$. Therefore the unique regret-optimal controller is
$u_t=K_tx_t$.

Now assume that $\bar H_t\neq0$ for some $t$ which implies $\max\{\alpha, \beta\}> 0$. The two LMIs imply $\gamma\geq \max\{\alpha, \beta\}$ and so $\gamma > 0$. The second LMI in \eqref{eq:dual_Fg} thus implies $\mathbb{R}^d=\mathrm{Im}(\gamma I_d)\subseteq \mathrm{Im}(\gamma I_d - \mathcal{A})$
and so $\gamma I_d - \mathcal{A}$ is invertible, i.e. $\gamma > \alpha$. A Schur complement argument together with the monotonicity of $U\mapsto\mathrm{tr}(U\hat\Sigma)$ 
shows that at the optimum $U^\star=\gamma^2(\gamma I_d - \mathcal{A})^{-1}$. 
Furthermore, another Schur complement argument shows that $\tau^\star = c^\top(\gamma I_d - \mathcal{B})^\dagger c$ if $c\in\mathrm{Im}(\gamma I_d-\mathcal{B})$ and $\infty$ otherwise. 
Combining all these results gives the representation of $\Psi(\gamma)$ stated in the lemma. If $\delta>0$, then $\Psi(\gamma)=\delta^2\gamma+O(1)$ as $\gamma\to\infty$, hence $\Psi(\gamma)\to\infty$. Moreover, if $\hat\Sigma\succ0$, then $\Psi(\gamma)\to\infty$ as $\gamma\downarrow\alpha$. If $\beta\notin\mathcal D$, then also $\Psi(\gamma)\to\infty$ as $\gamma\downarrow\beta$; if $\beta\in\mathcal D$, then $\Psi(\beta)<\infty$ by definition. Therefore $\Psi$ attains its minimum on $\mathcal D$.
\end{proof}

\begin{theorem}\label{thm:wc_recipe}
Assume $\hat\Sigma\succ0$, $\delta>0$, $\bar H_t\neq 0$ for some $t$. For $\gamma>\max\{\alpha,\beta\}$, define
\[
\begin{aligned}
z_\gamma&:=(\gamma I_d-\mathcal B)^{-1}c,\\
\Sigma_\gamma&:=\gamma^2(\gamma I_d-\mathcal A)^{-1}\hat\Sigma(\gamma I_d-\mathcal A)^{-1}.
\end{aligned}
\]
If $\beta>\alpha$ and $c\in\mathrm{Im}(\beta I_d-\mathcal B)$, define additionally
\[
\begin{aligned}
z_\beta^0&:=(\beta I_d-\mathcal B)^\dagger c,\\
\Sigma_\beta&:=\beta^2(\beta I_d-\mathcal A)^{-1}\hat\Sigma(\beta I_d-\mathcal A)^{-1}.
\end{aligned}
\]
\begin{enumerate}
\item If either $\alpha\ge\beta$, or $\beta>\alpha$ and
$c\notin\mathrm{Im}(\beta I_d-\mathcal B)$, then the minimizer
$\gamma^\star$ lies in $(\max\{\alpha,\beta\},\infty)$ and is uniquely
characterized by the first-order condition
\[
\begin{aligned}
0=\Psi'(\gamma^\star)
&=\delta^2-\|z_{\gamma^\star}\|_2^2
-\tr\!\bigl(\mathcal A^2(\gamma^\star I_d-\mathcal A)^{-2}\hat\Sigma\bigr)\\
&=\delta^2-\|z_{\gamma^\star}\|_2^2-\mathsf B^2(\Sigma_{\gamma^\star},\hat\Sigma).
\end{aligned}
\]

\item If $\beta>\alpha$ and $c\in\mathrm{Im}(\beta I_d-\mathcal B)$, then
\[
\begin{aligned}
\Psi'_+(\beta)
&=\delta^2-\|z_\beta^0\|_2^2
-\tr\!\bigl(\mathcal A^2(\beta I_d-\mathcal A)^{-2}\hat\Sigma\bigr)\\
&=\delta^2-\|z_\beta^0\|_2^2-\mathsf B^2(\Sigma_\beta,\hat\Sigma).
\end{aligned}
\]
If $\Psi'_+(\beta)\ge 0$, then the minimum is attained at the boundary,
i.e. $\gamma^\star=\beta$. If $\Psi'_+(\beta)<0$, then the minimizer lies in
$(\beta,\infty)$ and is uniquely characterized by $\Psi'(\gamma^\star)=0$.
\end{enumerate}

In the interior case, the worst-case distribution is unique and given by
\[
\mu^\star=\hat\mu+z_{\gamma^\star},\qquad
\Sigma^\star=\Sigma_{\gamma^\star}.
\]
If $\gamma^\star=\beta$, then the worst-case covariance is uniquely
$\Sigma^\star=\Sigma_\beta$, while the worst-case means are
\[
\mu^\star=\hat\mu+ z_\beta^0+v,\qquad
v\in\ker(\beta I_d-\mathcal B),\qquad
\|v\|_2^2=\Psi'_+(\beta).
\]
Hence multiple worst-case distributions can occur only in the boundary case
$\gamma^\star=\beta$. In particular, this boundary case always occurs for the DRRO-optimal policy.
\end{theorem}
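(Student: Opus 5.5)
The plan is to analyze the scalar function $\Psi$ of Lemma~\ref{lem:scalarized_dual_wc} on $(\max\{\alpha,\beta\},\infty)$. Then I will recover the primal maximizers from complementary slackness of the two dualization steps in Theorem~\ref{thm:dual_Fg}: the S-lemma step for $z$ and the Bures lifting for $\Sigma$.

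\textbf{Convexity of $\Psi$ and its derivative.} On $\gamma>\max\{\alpha,\beta\}$, diagonalize $\mathcal B$ and $\mathcal A$. The term $c^\top(\gamma I-\mathcal B)^{-1}c=\sum_i c_i^2/(\gamma-b_i)$ is convex and decreasing, with derivative $-\|z_\gamma\|_2^2$. For the covariance term, write
\[
\frac{\gamma^2}{\gamma-a_i}=\gamma+a_i+\frac{a_i^2}{\gamma-a_i}.
\]
Hence
\[
\gamma^2\tr\bigl((\gamma I-\mathcal A)^{-1}\hat\Sigma\bigr)=\gamma\tr\hat\Sigma+\tr(\mathcal A\hat\Sigma)+\tr\bigl(\mathcal A^2(\gamma I-\mathcal A)^{-1}\hat\Sigma\bigr).
\]
This is convex, and its derivative is $\tr\hat\Sigma-\tr(\mathcal A^2(\gamma I-\mathcal A)^{-2}\hat\Sigma)$. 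Adding the linear term gives the stated formula for $\Psi'$.

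To identify the last term with a Bures distance, put $G=\gamma(\gamma I-\mathcal A)^{-1}\succ0$, so $\Sigma_\gamma=G\hat\Sigma G$. Then $(\hat\Sigma^{1/2}\Sigma_\gamma\hat\Sigma^{1/2})^{1/2}=\hat\Sigma^{1/2}G\hat\Sigma^{1/2}$, and therefore $\mathsf B^2(\Sigma_\gamma,\hat\Sigma)=\tr((G-I)^2\hat\Sigma)$. Since $G-I=\mathcal A(\gamma I-\mathcal A)^{-1}$, this equals $\tr(\mathcal A^2(\gamma I-\mathcal A)^{-2}\hat\Sigma)$.

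\textbf{Locating $\gamma^\star$.} The function $\Psi$ is strictly convex. Indeed, $\gamma\tr\hat\Sigma$ cancels, and the remaining terms $a_i^2/(\gamma-a_i)$ and $c_i^2/(\gamma-b_i)$ are strictly convex unless $\mathcal A=0$ and $c=0$. In that degenerate subcase $\Psi$ is linear with slope $\delta^2>0$, so I would handle it separately; it falls into case 2 or the boundary case. Also $\Psi'\to\delta^2>0$ as $\gamma\to\infty$.

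In case 1, attainment from Lemma~\ref{lem:scalarized_dual_wc} together with the blow-up of $\Psi$ at the left endpoint forces an interior minimizer. By strict convexity, that minimizer is the unique root of $\Psi'$.

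In case 2, $\Psi$ extends continuously to $\beta$. The right derivative is computed by the same formula, with the pseudoinverse handling the modes in $\ker(\beta I-\mathcal B)$; those modes have $c_i=0$. Convexity then gives the dichotomy: if $\Psi'_+(\beta)\ge0$ the minimum is at $\beta$, and otherwise it is at the unique interior root.

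\textbf{Worst-case laws.} Let $(z,\Sigma)$ be a maximizer of \eqref{eq:controller_objective_in_Fg}. Weak duality with $(\gamma^\star,\tau^\star,U^\star)$ must hold with equality. This forces three things.

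First, $(\gamma^\star I-\mathcal B)z=c$, which comes from the first LMI: the quadratic $\gamma\|z\|^2-z^\top\mathcal Bz-2z^\top c+\tau\ge0$ must be tight.

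Second, the budget constraint is active, $\|z\|^2+\mathsf B^2(\Sigma,\hat\Sigma)=\delta^2$, whenever $\gamma^\star>0$.

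Third, $\Sigma$ maximizes $\tr(\mathcal A\Sigma)-\gamma^\star\mathsf B^2(\Sigma,\hat\Sigma)$. By the first-order condition of this strictly concave problem, the maximizer is $\Sigma_{\gamma^\star}$. I would verify this by the Gelbrich/Bures optimizer formula, $\Sigma=G\hat\Sigma G$ with $G=\gamma(\gamma I-\mathcal A)^{-1}$. Uniqueness follows from the strict concavity of $-\mathsf B^2$ in $\Sigma$ for $\hat\Sigma\succ0$.

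In the interior case, $z=z_{\gamma^\star}$ is forced, and $\Psi'(\gamma^\star)=0$ confirms feasibility, which gives uniqueness. In the boundary case, $z=z^0_\beta+v$ with $v\in\ker(\beta I-\mathcal B)$. The active budget gives $\|z^0_\beta\|^2+\|v\|^2+\mathsf B^2(\Sigma_\beta,\hat\Sigma)=\delta^2$, using that $v\perp z^0_\beta$ because $z^0_\beta\in\mathrm{Im}$. This yields $\|v\|^2=\Psi'_+(\beta)$. Conversely, I would check that each such pair attains the dual value by direct substitution.

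\textbf{Final claim.} By Lemma~\ref{lem:scalarized_dual_wc}, the DRRO-optimal policy has positive regret. The preceding theorem then gives at least two worst-case pairs, which excludes the interior case. So $\gamma^\star=\beta$ and $\Psi'_+(\beta)>0$.

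\textbf{Main obstacle.} I expect the main obstacle to be the rigorous complementary-slackness and uniqueness argument for $\Sigma$: showing that the Bures-penalized maximization has the unique maximizer $\Sigma_\gamma$. I would prove this via the lifted SDP, where tightness of the second LMI forces $Y=\hat\Sigma G$ and $\Sigma=G\hat\Sigma G$.
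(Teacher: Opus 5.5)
Your proposal is correct and takes essentially the paper's route. Like you, the paper differentiates $\Psi$ on $(\max\{\alpha,\beta\},\infty)$ and gets uniqueness of the interior root from $\Psi''\ge0$, excluding the same degenerate case $c=0$, $\mathcal A=0$. It then recovers the worst-case pairs from the equality cases of the mean bound $z^\top\mathcal B z+2z^\top c\le c^\top(\gamma I_d-\mathcal B)^{-1}c+\gamma\|z\|_2^2$ and the covariance bound $\tr(\mathcal A\Sigma)-\gamma\,\mathsf B^2(\Sigma,\hat\Sigma)\le\gamma^2\tr\bigl((\gamma I_d-\mathcal A)^{-1}\hat\Sigma\bigr)-\gamma\tr(\hat\Sigma)$, which is exactly your complementary-slackness argument.

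Your added details fill in steps the paper only asserts:
\begin{itemize}
\item the explicit Bures computation via $G=\gamma(\gamma I_d-\mathcal A)^{-1}$;
\item strict concavity of $-\mathsf B^2$, which gives uniqueness of $\Sigma$;
\item the orthogonality $v\perp z_\beta^0$;
\item the sharpening $\Psi'_+(\beta)>0$ for the DRRO-optimal policy.
\end{itemize}
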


\begin{proof}
By Lemma~\ref{lem:scalarized_dual_wc}, $\mathcal R(F,g)=\min_{\gamma\in\mathcal D}\Psi(\gamma)$ and the minimum is attained. On $(\max\{\alpha,\beta\},\infty)$,
\[
\begin{aligned}
\Psi'(\gamma)=\delta^2-\|(\gamma I_d-\mathcal B)^{-1}c\|_2^2-\tr\bigl(\mathcal A^2(\gamma I_d-\mathcal A)^{-2}\hat\Sigma\bigr).
\end{aligned}
\]
A direct computation gives $\tr\!\bigl(\mathcal A^2(\gamma I_d-\mathcal A)^{-2}\hat\Sigma\bigr)=\mathsf B^2(\Sigma_\gamma,\hat\Sigma)$,
so the stated first-order condition follows.
Moreover,
\[
\Psi''(\gamma)
=2c^\top(\gamma I_d-\mathcal B)^{-3}c
+2\tr\!\bigl(\mathcal A^2(\gamma I_d-\mathcal A)^{-3}\hat\Sigma\bigr)\ge 0,
\]
so $\Psi'$ is nondecreasing. If there were two interior minimizers, then
$\Psi'$ would vanish on the interval between them, and hence so would
$\Psi''$. This forces $c=0$ and, since $\hat\Sigma\succ0$, also
$\mathcal A=0$, which in turn gives $\Psi'(\gamma)=\delta^2>0$, a
contradiction. Therefore the interior minimizer is unique.

Note that for any $z$ it holds
\[
z^\top\mathcal B z+2z^\top c
\leq 
c^\top(\gamma I_d-\mathcal B)^{-1}c+\gamma\|z\|_2^2.
\]
with equality if and only if $z = z_\gamma$. 
Moreover, it also holds that for all $\Sigma$ that
\[
\begin{aligned}
\tr(\mathcal A\Sigma)
\leq-\gamma\tr(\hat\Sigma)
+\gamma^2\tr\!\bigl((\gamma I_d-\mathcal A)^{-1}\hat\Sigma\bigr) +\gamma\,\mathsf B^2(\Sigma,\hat\Sigma).
\end{aligned}
\]
with equality if and only if $\Sigma = \Sigma_\gamma$.
Therefore, for all $(z, \Sigma)$ with $\|z\|_2^2 + \mathsf B^2(\Sigma, \hat\Sigma)\leq \delta^2$
\begin{align*}
a+z^\top\mathcal B z+2z^\top c+\tr(\mathcal A\Sigma) \leq\Psi(\gamma)-\gamma\bigl(\delta^2 - \|z\|_2^2
-\mathsf B^2(\Sigma,\hat\Sigma)\bigr) \leq \Psi(\gamma).
\end{align*}
At an interior minimizer $\gamma^\star$, the first-order condition
$\Psi'(\gamma^\star)=0$ gives
$\|z_{\gamma^\star}\|_2^2+\mathsf B^2(\Sigma_{\gamma^\star},\hat\Sigma)=\delta^2$,
so the primal value at $(\hat\mu+z_{\gamma^\star},\Sigma_{\gamma^\star})$
equals the dual optimum $\Psi(\gamma^\star)$ and the previous discussion shows that it is the unique value that does so. Thus this pair is the unique
worst-case distribution.

Now assume $\beta>\alpha$ and $c\in\mathrm{Im}(\beta I_d-\mathcal B)$, so $\beta$
is admissible. Then $z_\beta^0=(\beta I_d-\mathcal B)^\dagger c$ is well defined
and
\[
\begin{aligned}
\Psi'_+(\beta)
&=\delta^2-\|z_\beta^0\|_2^2
-\tr\!\bigl(\mathcal A^2(\beta I_d-\mathcal A)^{-2}\hat\Sigma\bigr)\\
&=\delta^2-\|z_\beta^0\|_2^2-\mathsf B^2(\Sigma_\beta,\hat\Sigma).
\end{aligned}
\]
By convexity of $\Psi$, $\gamma^\star=\beta$ is optimal if and only if
$\Psi'_+(\beta)\ge0$; otherwise every minimizer is interior.
If $\gamma^\star=\beta$, then equality in the mean inequality holds precisely
for $z=z_\beta^0+v$ with $v\in\ker(\beta I_d-\mathcal B)$,
while equality in the covariance inequality still forces
$\Sigma=\Sigma_\beta$, since $\beta>\alpha$. Choosing additionally
$\|v\|_2^2=\Psi'_+(\beta)$ gives
\[
\|z\|_2^2+\mathsf B^2(\Sigma_\beta,\hat\Sigma)=\delta^2,
\]
so equality holds in the same upper bound as above. Hence the primal value
equals $\Psi(\beta)$. Therefore the worst-case covariance is uniquely
$\Sigma_\beta$, while the worst-case means are exactly those stated. Multiple
worst-case distributions can occur only in the boundary case $\gamma^\star=\beta$.

Finally, for the DRRO-optimal policy, the previous theorem shows that the worst-case distribution cannot be unique. Hence the interior case is impossible, and so $\gamma^\star=\beta$.
\end{proof}

\section{Experiments}\label{sec:experiments}

We illustrate the SDP controllers on a scalar inventory-planning example with
persistent demand shocks. Let $I_t$ be the inventory level, $q_t$ the
replenishment decision, and
\[
D_{t+1}=\bar D+d_{t+1},
\qquad
d_{t+1}=\rho d_t+\eta_{t+1},
\qquad |\rho|<1
\]
the demand. The inventory balance is $I_{t+1}=I_t+q_t-D_{t+1}$.
With the centered variables $x_t:=I_t-I^\star$, where $I^\star$ is the target inventory, and $u_t:=q_t-\bar D$, the
augmented state $z_t:=[x_t\ \ d_t]^\top$ evolves as
\[
z_{t+1}
=
\begin{bmatrix}
1 & -\rho\\
0 & \rho
\end{bmatrix}z_t
+
\begin{bmatrix}
1\\
0
\end{bmatrix}u_t
+
\begin{bmatrix}
-1\\
1
\end{bmatrix}\eta_{t+1},
\]
which fits the LQR model from Section~\ref{sec:model_formulation} with scalar primitive disturbance
$w_t=\eta_{t+1}\in\R$. The cost is
\[
\sum_{t=0}^{T-1}(h x_t^2+r u_t^2)+h_T x_T^2.
\]
Throughout, $\rho=0.7$, $h=h_T=1$, $r=0.25$, $x_0=1$, $d_0=0$, $\bar D=10$,
and $(\hat\mu,\hat\sigma)=(0,0.5)$.
Because the disturbance is scalar, we write $\Sigma=\sigma^2$, so the Gelbrich ambiguity set becomes the disk $(\mu-\hat\mu)^2+(\sigma-\hat\sigma)^2\le\delta^2$ with $\sigma\ge 0$.

\begin{figure}[!t]
\centering
\begin{minipage}[t]{0.49\linewidth}
\centering
\includegraphics[width=\linewidth]{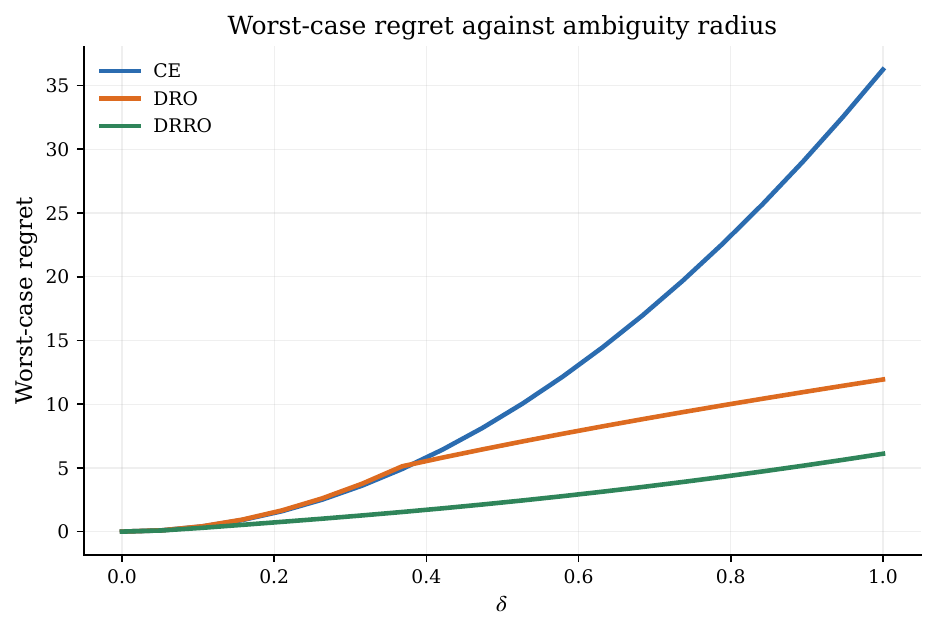}
\end{minipage}\hfill
\begin{minipage}[t]{0.49\linewidth}
\centering
\includegraphics[width=\linewidth]{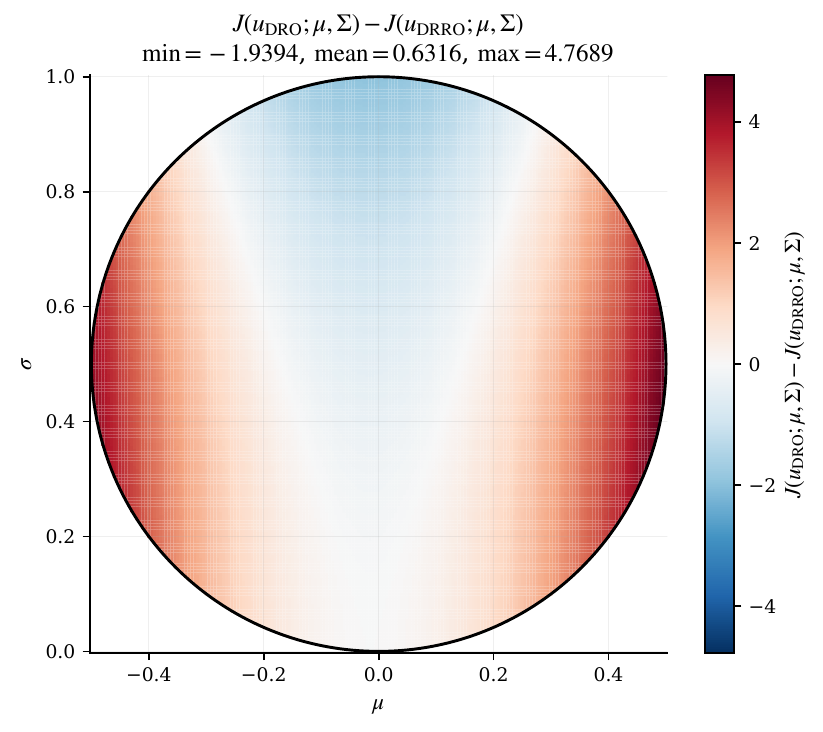}
\end{minipage}
\caption{Left: worst-case regret versus ambiguity radius for $T=20$. Right:
heatmap of $J(u_{\mathrm{DRO}};\mu,\sigma^2)-J(u_{\mathrm{DRRO}};\mu,\sigma^2)$ over the ambiguity
ball for $T=20$ at $\delta=0.5$; positive values favor DRRO.}
\label{fig:cdc-regret-heatmap}
\end{figure}

\begin{figure}[!t]
\centering
\begin{minipage}[t]{0.48\linewidth}
\vspace{0pt}
\centering
\includegraphics[width=\linewidth]{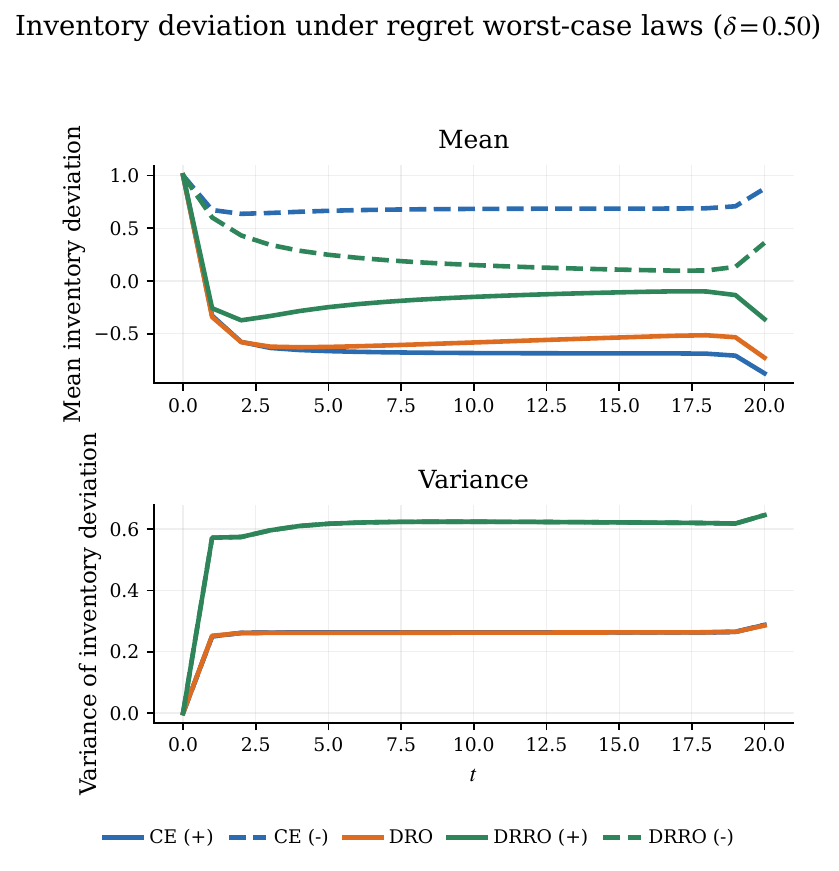}
\captionof{figure}{Mean and variance of the inventory deviation under
worst-case regret laws. Each policy is evaluated under its own
worst-case law.}
\label{fig:cdc-regret-inventory}
\end{minipage}\hfill
\begin{minipage}[t]{0.48\linewidth}
\vspace{0pt}
\centering
\includegraphics[width=\linewidth]{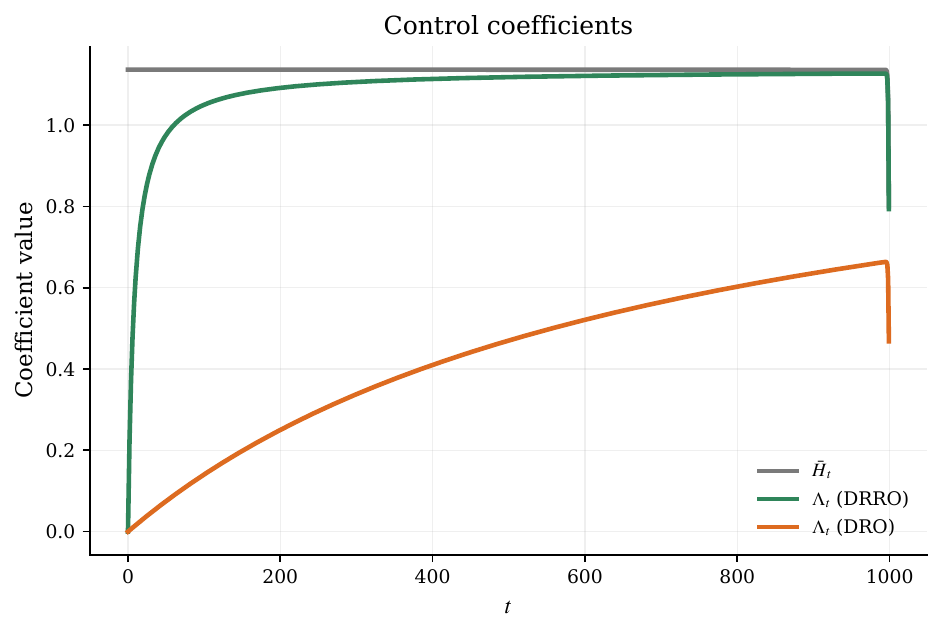}
\captionof{figure}{Comparison of $\Lambda_t$ and $\bar H_t$ for $T=1000$. The DRRO row
sums lie close to $\bar H_t$, whereas the DRO row sums remain visibly
separated.}
\label{fig:cdc-lambda-hbar}
\end{minipage}
\end{figure}

\subsection{Worst-Case Regret and Ambiguity-Ball Comparison}

For each $\delta\in[0,1]$, we solve the CE, DRO, and DRRO synthesis problems
on the same ambiguity ball and evaluate each controller under its worst-case
regret law. Figure~\ref{fig:cdc-regret-heatmap} (left) shows the resulting
worst-case regret curves for $T=20$; DRRO uniformly dominates CE and DRO.
Figure~\ref{fig:cdc-regret-heatmap} (right) plots
$J(u_{\mathrm{DRO}};\mu,\sigma^2)-J(u_{\mathrm{DRRO}};\mu,\sigma^2)$ over the
ambiguity ball parameterized by $(\mu,\sigma)$ for $T=20$ at $\delta=0.5$.
Positive values favor DRRO, and the heatmap shows that DRRO outperforms DRO on
average and over a substantial portion of the ambiguity ball, with larger gains
where it is better than losses where DRO is better.

\subsection{Representative Inventory Trajectories}

Figure~\ref{fig:cdc-regret-inventory} shows representative inventory
trajectories under the policy-dependent worst-case regret laws. Since the
regret maximizer is nonunique for CE and DRRO in this experiment, we display
the two maximizers. DRRO drives the inventory deviation more
aggressively toward zero than CE and DRO, at the cost of higher variance.

\subsection{Asymptotic Behavior of the Correction Coefficients}

If the true stage law is $(\mu,\Sigma)$, then by the law of large numbers $\bar w_{0:t-1}\to\mu$. Hence if
$\Lambda_t\to\bar H_t$, the correction term approaches
$\bar H_t(\mu-\hat\mu)$ and the controller behaves like
the optimal policy for $(\mu, \Sigma)$, namely $K_t x_t+\bar H_t\mu$. Figure~\ref{fig:cdc-lambda-hbar} shows this for the case $T=1000$: The convergence is visible for DRRO but not for
DRO. Thus the built-in learning interpretation is empirically supported only
for DRRO.

\section{Conclusion}

In contrast to the benign DRRO one-stage quadratic setting, the
multistage problem is genuinely dynamic: repeated use of the same unknown stage
law makes past disturbances informative about the stage-law mean and creates an
intertemporal learning effect. Over the class of linear policies, this leads to
an exact SDP reformulation and an optimal policy of the form nominal
certainty-equivalent LQR plus a strictly causal empirical-mean correction. Our
numerical results indicate that, relative to Wasserstein DRO, DRRO is often
less conservative while achieving smaller worst-case regret. Future work
includes investigating if linear policies are optimal, $\Lambda_t\to\bar H_t$, and partial observability extensions.

\section*{Acknowledgments}
We acknowledge the use of GPT-5.4, OpenAI, \cite{chatgpt2026}, for editorial assistance, writing code for experiments, 
and verifying mathematical derivations. We reviewed and edited all outputs and take full responsibility
for the scientific integrity of the final content.

J. Blanchet also acknowledges support from NSF grant 2312204 and ONR N000142412655.

\bibliographystyle{IEEEtran}
\bibliography{references}

\end{document}